\theoremstyle{plain}
\newtheorem{theorem}{Theorem}[section]
\newtheorem{lemma}[theorem]{Lemma}
\newtheorem{corollary}[theorem]{Corollary}
\newtheorem{proposition}[theorem]{Proposition}
\theoremstyle{definition}
\newtheorem{definition}[theorem]{Definition}
\theoremstyle{remark}
\newtheorem{remark}[theorem]{Remark}
 \numberwithin{equation}{section}
\newcommand{\pd}{\hat p}
\newcommand{\qd}{\hat q}
\newcommand{\Pb}{\mbox{\rm (P)}\xspace}
\newcommand{\uad}{U_{\rm ad}}
\newcommand{\proj}{\operatorname{Proj}}
\title{Critical cones for sufficient second order conditions in PDE constrained optimization\thanks{The authors were partially supported by Spanish Ministerio de Econom\'{\i}a y Competitividad under research project MTM2017-83185-P.}}
\author{Eduardo Casas\thanks{Departmento de Matem\'{a}tica Aplicada y Ciencias de la Computaci\'{o}n, E.T.S.I. Industriales y de Telecomunicaci\'on, Universidad de Cantabria, 39005 Santander, Spain, {\tt eduardo.casas@unican.es}.}
\and Mariano Mateos\thanks{Departamento de Matem\'{a}ticas, Campus de Gij\'on, Universidad de Oviedo, 33203, Gij\'on, Spain, {\tt mmateos@uniovi.es}.}
}
\date{\url{https://epubs.siam.org/doi/10.1137/19M1258244}}
\begin{document}
\maketitle
\begin{abstract}
In this paper, we analyze optimal control problems governed by semilinear parabolic equations. Box constraints for the controls are imposed and the cost functional involves the state and possibly a sparsity-promoting term, but not a Tikhonov regularization term. Unlike finite dimensional optimization or control problems involving Tikhonov regularization, second order sufficient optimality conditions for the control problems we deal with must be imposed in a cone larger than the one used to obtain necessary conditions. Different extensions of this cone have been proposed in the literature for different kinds of minima: strong or weak minimizers for optimal control problems. After a discussion on these extensions, we propose a new extended cone smaller than those considered until now. We prove that a second order condition based on this new cone is sufficient for a strong local minimum.
\end{abstract}
\begin{quote}
\textbf{Keywords:}
optimal control,  semilinear partial differential equation, optimality conditions, sparse controls
\end{quote}

\begin{quote}
\textbf{AMS Subject classification: }
35K59, 
35J61, 
49K20 
\end{quote}

\section{Introduction}
\label{S1}
Let us consider a domain $\Omega\subset\mathbb R^n$, $n\leq 3$, with a Lipschitz boundary $\Gamma$. Given $T>0$ we denote $Q=\Omega\times(0,T)$ and $\Sigma = \Gamma\times(0,T)$. In this paper, we investigate  second order sufficient optimality conditions for the control problem
\[
\Pb \min_{u \in \uad}  J(u):=F(u)+\mu j(u),
\]
where $\mu\geq 0$. Additionally, for $\mu>0$, we will further suppose that $\alpha < 0 < \beta$,
\[
\uad =\{u \in L^\infty(Q): \alpha \le u(x,t) \le \beta\ \text{ for a.a. } (x,t) \in Q\}
\]
with $-\infty < \alpha < \beta < +\infty$,
\[
F(u) =  \int_Q L(x,t,y_u(x,t)) \, dx\, dt + \nu_\Omega \int_\Omega L_\Omega(x,y_u(x,T)) dx,
\]
$\nu_\Omega\in\{0,1\}$,
and $j:L^1(Q)\to\mathbb{R}$ is given by $j(u) = \|u\|_{L^1(Q)}$.

Above $y_u$ denotes the state associated to the control $u$ related by the following semilinear parabolic state equation
\begin{equation}\label{E1.1}
\left\{\begin{array}{rcll}
\displaystyle\frac{\partial y_u}{\partial t} + A y_u + f(x,t,y_u) &=& u&\mbox{ in }Q,\\
 y_u& =& 0 &\mbox{ on }\Sigma,\\
  y_u(0)& = &y_0&\mbox{ in }\Omega.
  \end{array}
  \right.
\end{equation}
Assumptions on the data $A$, $f$, $y_0$, $L$ and $L_\Omega$ are specified in Section \ref{S2}.

It is well known that if $\bar u$ is a local minimum then first order necessary optimality conditions can be written as
\[J'(\bar u;u-\bar u)\geq 0\ \forall u \in \uad\]
while second order necessary optimality conditions  read like
\[F''(\bar u) v^2\geq 0\ \forall u\in C_{\bar u}\]
where $C_{\bar u}$ is the cone
\[C_{\bar u} =\{v\in L^2(Q)\mbox{ satisfying the sign condition }\eqref{EQU1.2}\mbox{ and }J'(\bar u;v) = 0\},\]
\begin{equation}\label{EQU1.2}
  v(x,t)\left\{\begin{array}{cl}
                    \geq 0 & \mbox{ if }\bar u(x,t)=\alpha, \\
                    \leq 0 & \mbox{ if }\bar u(x,t)=\beta.
                  \end{array}\right.
\end{equation}
The reader is referred to \cite[Theorem 3.7]{CHW2012a} for the elliptic case or \cite[Theorem 3.1. Case I]{CHW2016} for the parabolic case. 

It is well known that in finite dimensional optimization the cone used to establish necessary second order necessary optimality conditions is the same as the one used for sufficient second order conditions. However this not the case in general for optimization problems in infinite dimension; see the example by Dunn \cite{Dunn98}. Despite this, if the Tikhonov term $\frac{\gamma}{2}\|u\|^2_{L^2(Q)}$ with $\gamma > 0$ is present in the cost functional of the control problem, we can take the same cone for both necessary and sufficient conditions; see e.g., \cite{Bonnans1998}, \cite{Casas-Troltzsch2012} or \cite{Casas-Troltzsch2016} for the case $\mu = 0$, or  \cite{CHW2012a}, \cite{CHW2016} or \cite{CRT2014} for $\mu > 0$.  Other works that consider second order sufficient conditions for problems with no Tikhonov regularization are \cite{CMR2019}, \cite{CWW2017}, \cite{CWW2018}, and \cite{Christof-Wachsmuth2018}. The results in these works cannot be applied to our problem due to the facts that we deal with a semilinear parabolic equation, our controls depend both on space and time and we do not have any assumption on the structure of the adjoint state.

In this paper, the Tikhonov term is not present. Then, an approach to deal with second order sufficient conditions, as suggested by Dunn \cite{Dunn98} or Maurer and Zowe \cite{Maurer-Zowe79} among others, consists of extending the cone of critical directions $C_{\bar u}$.
As far as we know, two ways to enlarge the cone have been proposed in the literature. In the context of abstract optimization problems, following Maurer and Zowe \cite{Maurer-Zowe79}, one could replace the condition $J'(\bar u;v)=0$ by $J'(\bar u;v)\leq \tau \|v\|_{L^2(Q)}$ for some small $\tau > 0 $.
In optimal control problems we can take advantage of the structure of the problem to define a slightly smaller cone by taking
\begin{equation}E_{\bar u}^\tau=\Big\{v\in L^2(Q)\mbox{ satisfying }\eqref{EQU1.2}\mbox{ and } J'(\bar u;v)\leq \tau \big( \|z_v\|_{L^2(Q)} +\nu_\Omega
\|z_v(\cdot,T)\|_{L^2(\Omega)}\big)\Big\},
\label{E1.2}
\end{equation}
where $z_v$ is the derivative of the control-to-state mapping in the direction $v$; see \eqref{E2.4} below.
A second alternative to extend $C_{\bar u}$ is based on the observation that for functions $v\in L^2(Q)$ satisfying the sign condition \eqref{EQU1.2} we have
\begin{align*}\mbox{for }\mu = 0:\ J'(\bar u;v) = 0\iff &v(x,t)=0\mbox{ if }|\bar\varphi(x,t)|>0\\
\mbox{for }\mu > 0:\ J'(\bar u;v) = 0\iff & v(x,t)
\left\{\begin{array}{cl}
                    \geq 0 & \mbox{ if }\bar\varphi(x,t) = -\mu\mbox{ and }\bar u(x,t)=0  \\
                    \leq 0 & \mbox{ if }\bar\varphi(x,t) = +\mu\mbox{ and }\bar u(x,t)=0  \\
                    =0 & \mbox{ if }\Big| |\bar\varphi(x,t)| -\mu \Big | > 0
                  \end{array}\right.
\end{align*}
where $\bar\varphi$ is the adjoint state associated with $\bar u$, defined in \eqref{E2.13} below; see \cite{Casas2012}, \cite{CRT2014}, \cite{Casas-Troltzsch2016}, \cite{CWW2017}, \cite{CWW2018}. Then a natural extension can be done specifying a smaller set of points where the functions $v$ should vanish: given $\tau >0$ we define the extended cone
\begin{align*}
\mbox{for }\mu = 0:\ D_{\bar u}^\tau = &\{v\in L^2(\Omega)\mbox{ satisfying }\eqref{EQU1.2}\mbox{ and }  v(x,t)=0\mbox{ if }|\bar\varphi(x,t)|>\tau\}\\
\mbox{for }\mu > 0:\ D_{\bar u}^\tau = &\Bigg\{v\in L^2(\Omega)\mbox{ satisfying }\eqref{EQU1.2}\mbox{ and } \\
& \hspace{1.2cm}v(x,t)
\left\{\begin{array}{cl}
                    \geq 0 & \mbox{ if }\bar\varphi(x,t) = -\mu\mbox{ and }\bar u(x,t)=0  \\
                    \leq 0 & \mbox{ if }\bar\varphi(x,t) = +\mu\mbox{ and }\bar u(x,t)=0  \\
                    =0 & \mbox{ if }\Big| |\bar\varphi(x,t)| -\mu \Big | > \tau
                  \end{array}\right.\Bigg\}.
\end{align*}

The following question immediately arises: is one of these two extensions better than the other? The answer seems to be difficult because they are not easy to compare. However we solve this issue by choosing
$D_{\bar u}^\tau\cap E_{\bar u}^\tau$. The main goal of this paper is to prove that a second order optimality condition based on this cone along with the first order optimality conditions imply the strong local optimality of $\bar u$.

The plan of the paper is as follows. In Section \ref{S2} we establish the assumptions on the functions defining \Pb, recall some regularity results on the state equation and the linearized state equation and establish the differentiability properties of the control-to-state mapping. We also state necessary optimality conditions. In Section \ref{Se.3} we prove our main result, namely Theorem \ref{T3.1}. In Section \ref{Se.4} we comment about extensions and limitations of our main result.

Before ending this introduction let us mention that the methods used in this paper cannot be applied to the case of control problems governed by the Navier-Stokes system. This is due to the fact that our approach requieres $L^\infty(Q)$ bounds for the states; see Theorem \ref{T2.1}. For quasilinear parabolic equations, it seems possible to obtain similar bounds using the results in \cite{Casas-Chrysafinos2018}. Also it seems reasonable that estimates analogous to that of \eqref{E2.7} or \eqref{E2.12} hold, but the extension is not immediate and is beyond the scope of this paper. We refer the reader interested in optimal control problems governed by these types of equations to \cite{Casas-Chrysafinos2012}, \cite{Casas-Chrysafinos2015}, \cite{Casas-Chrysafinos2018},  \cite{CasasDhamo2011}, \cite{CMR2007}, \cite{Casas-Troltzsch2009}, \cite{Troltzsch-Wachsmuth05} for the case where the Tikhonov term is present in the cost functional.

\section{Assumptions and preliminary results}
\label{S2}
On the partial differential equation \eqref{E1.1}, we make the following assumptions.
\begin{itemize}
\item[(A1)] \label{A1} $A$ denotes the elliptic operator
\[
Ay =-\sum_{i,j=1}^n \partial_{x_j}(a_{i,j}(x)\partial_{x_i} y) + \sum_{j = 1}^n b_j(x,t)\partial_{x_j} y,
\]
where $b_j\in L^\infty(Q)$, $a_{i,j}\in L^\infty(\Omega)$, and the uniform ellipticity condition
\begin{equation*}
\exists \lambda_A>0 : \lambda_A|\xi|^2\leq \sum_{i,j=1}^n a_{i,j}(x)\xi_i\xi_j\ \mbox{ for all }\xi\in\mathbb R^n\mbox{ and a.a. }x\in \Omega
\end{equation*}
holds.
\item[(A2)] \label{A2} We assume that $f:Q\times\mathbb R\to\mathbb R$ is a Carath\'eodory function of class $C^2$ with respect to the last variable satisfying the following properties:
\begin{align*}
&\exists  C_f \in\mathbb R  : \frac{\partial f}{\partial y}(x,t,y)\geq C_f\ \forall y\in\mathbb R,
\\
&f(\cdot,\cdot,0)\in L^{\pd}(0,T;L^{\qd}(\Omega)) \ \text{ for some } \pd,\qd\geq 2 \text{ with } \frac{1}{\pd} + \frac{d}{2\qd}<1, \notag\\
&\forall M>0\ \exists C_{f,M}>0 : \left|\frac{\partial^j f}{\partial y^j}(x,t,y)\right|\leq C_{f,M}\ \forall |y|\leq M\mbox{ and } j=1,2,\notag\\
&\begin{array}{l}\forall \rho>0 \text{ and } \forall M>0\ \exists \varepsilon>0 \text{ such that}\\
\displaystyle\left|\frac{\partial^2 f}{\partial y^2}(x,t,y_1)-\frac{\partial^2 f}{\partial y^2}(x,t,y_2)\right|<\rho\  \forall|y_1|,|y_2|\leq M \text{ with } \ |y_1-y_2|<\varepsilon,\end{array}\notag
\end{align*}
for almost all $(x,t) \in Q$.
\end{itemize}
Examples of functions $f$ satisfying the above assumptions are the polynomials of odd degree with positive leading coefficients or the exponential function $f(x,t,y)= g(x,t)\mathrm{exp}(y)$ with $g\in L^\infty(Q)$, $g(x,t)\geq 0$ for almost all $(x,t) \in Q$.
\begin{itemize}
\item[(A3)]\label{A3} For the initial datum we assume $y_0\in L^\infty(\Omega)$.
\end{itemize}
On the functions $L$ and $L_\Omega$ defining the differentiable part $F$ of the cost functional $J$, we assume:
\begin{itemize}
\item[(A4)] \label{A4} $L:Q\times\mathbb R\to\mathbb R$ is a Carath\'eodory function of class $C^2$ with respect to the last variable satisfying the following properties:
\begin{align*}
&\begin{array}{l}
L(\cdot,{\cdot,}0)\in L^1(Q) \text{ and } \forall M>0 \ \exists \Psi_M\in L^{\pd}(0,T;L^{\qd}(\Omega)) \text{ and } C_{Q,M}\\ \text{ such that}\\  \displaystyle\left|\frac{\partial L}{\partial y}(x,t,y)\right|\leq \Psi_{M}(x,t) \text{ and }
\left|\frac{\partial^2 L}{\partial y^2}(x,t,y)\right|\leq C_{Q,M}\ \forall |y|\leq M,\end{array}
\\
&\begin{array}{l}\forall  \rho>0 \text{ and } \forall M>0\ \exists \varepsilon>0 \text{ such that}\\
\displaystyle\left|\frac{\partial^2 L}{\partial y^2}(x,t,y_1)-\frac{\partial^2 L}{\partial y^2}(x,t,y_2)\right|<\rho\ \forall |y_1|,|y_2|\leq M \text{ with } |y_1-y_2|<\varepsilon,\end{array}\notag
\end{align*}
for almost all $(x,t) \in Q$.

\item[(A5)] \label{A5} $L_\Omega:\Omega\times\mathbb R\to\mathbb R$ is a Carath\'eodory function of class $C^2$ with respect to the last variable satisfying the following properties:
\begin{align*}
&\begin{array}{l}L_\Omega(\cdot,0)\in L^1(\Omega) \text{ and } \forall M>0 \ \exists C_{\Omega,M} \text{ such that}\\  \displaystyle\left|\frac{\partial^j L_\Omega}{\partial y^j}(x,y)\right|\leq C_{\Omega,M}\ \forall |y|\leq M \mbox{ and }j=1,2\end{array}
\\
&\begin{array}{l}\forall  \rho>0 \text{ and } \forall M>0\ \exists \varepsilon>0 \text{ such that}\\
\displaystyle\left|\frac{\partial^2 L_\Omega}{\partial y^2}(x,y_1)-\frac{\partial^2 L_\Omega}{\partial y^2}(x,y_2)\right|<\rho\ \forall |y_1|,|y_2|\leq M \text{ with } |y_1-y_2|<\varepsilon,\end{array}\notag
\end{align*}
for almost all $x \in \Omega$.
\end{itemize}
Let us comment that the classical tracking-type cost functional
\[F(u) = \frac12\int_Q (y_u(x,t)-y_d(x,t))^2\, dx\, dt + \frac{\nu_\Omega}{2}\int_\Omega (y_u(x,T)-y_\Omega(x))^2\,dx\]
satisfies the above assumptions if $y_d\in L^{\pd}(0,T;L^{\qd}(\Omega))$ and $y_\Omega\in L^\infty(\Omega)$.

Hereafter, these hypotheses will be assumed without further notice throughout the rest of the work.

\subsection{Analysis of the state equation}
In this section we analyze the existence, uniqueness and some regularity properties for the solution of \eqref{E1.1} as well as its dependence with respect to the control $u$. We also prove some technical results to be used in the proof of our main result, Theorem \ref{T3.1}.
\begin{theorem}\label{T2.1}
For every $u\in L^{\pd}(0,T;L^{\qd}(\Omega))$ there exists a unique solution of \eqref{E1.1}, $y_u\in L^2(0,T;H^1_0(\Omega))\cap L^\infty( Q)$. Moreover, there exist positive constants $K_{\pd,\qd}$, $C_{\pd,\qd}$ and $M_\infty$ such that for all $u,\bar u\in\uad$,
\begin{align*}
& \|y_u\|_{L^2(0,T;H^1_0(\Omega))}+\|y_u\|_{L^\infty(Q)}\leq\\ &\hspace{2cm} K_{\pd,\qd}(\|u\|_{L^{\pd}(0,T;L^{\qd}(\Omega))}+\|f(\cdot,\cdot, 0)\|_{L^{\pd}(0,T;L^{\qd}(\Omega))} + \|y_0\|_{L^\infty(\Omega)} ),\\
& \|y_u-y_{\bar u}\|_{L^\infty(Q)}\leq  C_{\pd,\qd}\|u-\bar u\|_{L^{\pd}(0,T;L^{\qd}(\Omega))},\\
& \|y_u\|_{L^\infty(Q)}\leq M_\infty.
\end{align*}
Finally, if $u_k\rightharpoonup u$ weakly in  $L^{\pd}(0,T;L^{\qd}(\Omega))$, then the strong convergence \[
\|y_{u_k}-y_u\|_{L^\infty(Q)}+\|y_{u_k}-y_u\|_{L^2(0,T;H^1_0(\Omega))}
+\|y_{u_k}(\cdot,T)-y_u(\cdot,T)\|_{L^\infty(\Omega)}\to 0\]
holds.
\end{theorem}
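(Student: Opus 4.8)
The plan is to treat \eqref{E1.1} as a perturbation of a linear parabolic problem whose right-hand side lies in $L^{\pd}(0,T;L^{\qd}(\Omega))$, and to lean on the sharp regularity available under the Aronson--Serrin condition $\frac{1}{\pd}+\frac{d}{2\qd}<1$. First I would dispose of the sign of $C_f$: writing $y=e^{\lambda t}w$ with $\lambda\ge -C_f$ turns \eqref{E1.1} into an equation of the same type whose nonlinearity $\tilde f(x,t,w)=\lambda w+e^{-\lambda t}f(x,t,e^{\lambda t}w)$ satisfies $\partial_w\tilde f\ge 0$, so without loss of generality $f(x,t,\cdot)$ may be assumed nondecreasing. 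For such $f$ the operator $y\mapsto Ay+f(\cdot,\cdot,y)$ is monotone, coercive and hemicontinuous on $L^2(0,T;H^1_0(\Omega))$, so after truncating the nonlinearity to make it globally Lipschitz one obtains a weak solution by the standard Lions framework, and the a priori bounds below show the truncation is inactive. Uniqueness is immediate: subtracting two solutions, testing with their difference, and using the coercivity of $A$ together with the monotonicity of $f$ forces the difference to vanish.

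For the a priori bounds I would write $f(x,t,y)=f(x,t,0)+a(x,t)y$ with $a(x,t)=\int_0^1\frac{\partial f}{\partial y}(x,t,sy)\,ds\ge C_f$, so that $y_u$ solves the linear equation $\partial_t y_u+Ay_u+a\,y_u=u-f(\cdot,\cdot,0)$ with right-hand side in $L^{\pd}(0,T;L^{\qd}(\Omega))$. Testing with $y_u$ and using \eqref{E2.0}, the lower bound on $a$, and Gronwall's inequality yields the $L^2(0,T;H^1_0(\Omega))$ part of the first estimate; the $L^\infty(Q)$ part is exactly the classical maximum-norm bound for linear parabolic equations, valid precisely when $\frac{1}{\pd}+\frac{d}{2\qd}<1$ (Stampacchia truncation / Ladyzhenskaya--Solonnikov--Ural'tseva). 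Since every $u\in\uad$ satisfies $\|u\|_{L^\infty(Q)}\le\max\{|\alpha|,|\beta|\}$, its $L^{\pd}(0,T;L^{\qd}(\Omega))$-norm is bounded by a constant depending only on the data, and the first estimate then delivers the uniform bound $\|y_u\|_{L^\infty(Q)}\le M_\infty$. The Lipschitz estimate follows by the same linearization: $w=y_u-y_{\bar u}$ solves $\partial_t w+Aw+\bar a\,w=u-\bar u$ with $\bar a(x,t)=\int_0^1\frac{\partial f}{\partial y}(x,t,y_{\bar u}+s(y_u-y_{\bar u}))\,ds$, and since both states are bounded by $M_\infty$, the bound in (A2) makes $\bar a$ bounded while $\bar a\ge C_f$; the linear maximum-norm estimate applied to $w$ gives $\|y_u-y_{\bar u}\|_{L^\infty(Q)}\le C_{\pd,\qd}\|u-\bar u\|_{L^{\pd}(0,T;L^{\qd}(\Omega))}$.

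Finally, for $u_k\rightharpoonup u$ I would first use the uniform bounds to extract a subsequence with $y_{u_k}\to y$ strongly in $L^2(Q)$ and a.e.\ in $Q$ (Aubin--Lions, using a uniform bound on $\partial_t y_{u_k}$ in a negative-order space), pass to the limit in the weak formulation---the nonlinear term converging by a.e.\ convergence and dominated convergence thanks to the uniform $L^\infty$ bound---and identify $y=y_u$ by uniqueness, so that the whole sequence converges. To upgrade to the stated strong convergences I would use the representation $y_{u_k}=\zeta+T\big(u_k-f(\cdot,\cdot,y_{u_k})\big)$, where $\zeta$ solves the homogeneous problem with datum $y_0$ and $T$ is the linear solution operator with zero data. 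Here $f(\cdot,\cdot,y_{u_k})\to f(\cdot,\cdot,y_u)$ strongly in $L^{\pd}(0,T;L^{\qd}(\Omega))$ by dominated convergence, while $T(u_k)\to T(u)$ strongly in $C(\overline Q)$ because, under $\frac{1}{\pd}+\frac{d}{2\qd}<1$, the operator $T$ maps bounded sets of $L^{\pd}(0,T;L^{\qd}(\Omega))$ into bounded sets of a H\"older space $C^{\theta}(\overline Q)$, whence $T$ is compact into $C(\overline Q)$ and turns weak convergence into strong convergence. This yields $y_{u_k}\to y_u$ in $C(\overline Q)$, giving both the $L^\infty(Q)$ and the terminal-time $L^\infty(\Omega)$ convergences.

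I expect the compactness and H\"older regularity of $T$ to be the main obstacle: establishing that the maximum-norm and H\"older estimates hold up to the parabolic boundary (on a merely Lipschitz domain) under the borderline integrability $\frac{1}{\pd}+\frac{d}{2\qd}<1$ is the delicate analytical ingredient, whereas everything else reduces to energy estimates, monotonicity, and the compactness machinery.
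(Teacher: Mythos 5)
Your outline largely tracks the paper's proof, though with a different organization of the final step. Existence, uniqueness and the first and third estimates via monotonicity of the (exponentially shifted) nonlinearity plus the linear $L^\infty$ theory under $\frac{1}{\pd}+\frac{d}{2\qd}<1$ is exactly what the paper delegates to \cite[Theorem 5.1]{Casas-1997} and \cite[Chapter III]{LSU}; your mean-value linearization for the Lipschitz estimate, with the coefficient $\bar a$ bounded thanks to $M_\infty$, is also the paper's device. For the weak-to-strong continuity, the paper is more economical: it applies the mean value theorem to the difference $w_k=y_{u_k}-y_u$ itself, so that a single application of \cite[Theorem III-10.1]{LSU} gives $\|w_k\|_{C^{\gamma,\gamma/2}(\bar Q)}\le C_{\pd,\qd}\|u_k-u\|_{L^{\pd}(0,T;L^{\qd}(\Omega))}$, and the compact embedding $C^{\gamma,\gamma/2}(\bar Q)\hookrightarrow C(\bar Q)$ yields uniform convergence with no need for Aubin--Lions, a.e.\ limit identification, or your splitting $y_{u_k}-y_u=T(u_k-u)-T\big(f(\cdot,\cdot,y_{u_k})-f(\cdot,\cdot,y_u)\big)$. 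Your longer route is nevertheless correct: the dominated-convergence step for $f(\cdot,\cdot,y_{u_k})$ is justified by the uniform bound $M_\infty$ together with $f(\cdot,\cdot,0)\in L^{\pd}(0,T;L^{\qd}(\Omega))$, and a compact linear operator does turn weak convergence into strong convergence.

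There is, however, one genuine omission: the theorem also asserts $\|y_{u_k}-y_u\|_{L^2(0,T;H^1_0(\Omega))}\to 0$, and your proof never establishes it. Aubin--Lions gives strong convergence only in $L^2(Q)$, and convergence in $C(\bar Q)$ says nothing about gradients, so neither of your two steps reaches the $H^1_0$ statement. The repair is the paper's closing energy argument: test the equation $\partial_t w_k+Aw_k+\frac{\partial f}{\partial y}(x,t,\hat y_k)w_k=u_k-u$ with $w_k$ and integrate by parts. The right-hand side $\int_Q(u_k-u)w_k\,dx\,dt$ tends to zero (either because $u_k-u\rightharpoonup 0$ in $L^2(Q)$ while $w_k\to 0$ strongly there, or by the bound $\|u_k-u\|_{L^1(Q)}\|w_k\|_{C(\bar Q)}$), the zeroth-order term is controlled by $C_{f,M_\infty}\|w_k\|_{L^2(Q)}^2\to 0$, and the G\aa{}rding inequality obtained from \eqref{E2.0} then forces $\|\nabla w_k\|_{L^2(Q)}\to 0$. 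With this paragraph appended, your proof covers the full statement.
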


\begin{proof}To deal with the nonlinearity  in the state equation we can proceed as in \cite[Theorem 5.1]{Casas-1997}. Combining this approach with the well-known results for linear equations, see e.g.  \cite[Chapter III]{LSU}, existence, uniqueness, regularity and the first and third estimates follow easily.

To deduce the second estimate and the convergence properties, we introduce $w_k = y_{u_k}-y_u$. Subtracting the equations satisfied by $y_{u_k}$ and $y_u$ and using the mean value theorem  we get the existence of measurable functions $\hat y_k = y_u+\theta_k(y_{u_k}-y_u)$,  $0<\theta_k(x,t)<1$, such that
\[
\left\{\begin{array}{rcll}
\displaystyle\frac{\partial w_k}{\partial t} + A w_k + \displaystyle\frac{\partial f}{\partial y}(x,t,\hat y_k)w_k &=& u_k-u&\mbox{ in }Q,\\
 w_k& =& 0&\mbox{ on }\Sigma,\\
  w_k(0)& = &0&\mbox{ in }\Omega.
  \end{array}
  \right.
\]
From \cite[Theorem III-10.1]{LSU}, we deduce the existence of $C_{\pd,\qd}>0$ and $\gamma \in(0,1)$ such that
$\|w_k\|_{C^{\gamma,\gamma/2}(\bar Q)}\leq C_{\pd,\qd}\|u_k-u\|_{L^{\pd}(0,T;L^{\qd}(\Omega))}$. This proves the second estimate. Finally, since $C^{\gamma,\gamma/2}(\bar Q)$ is compactly embedded in $C(\bar Q)$ it is immediate to see that $\|w_k\|_{C(\bar Q)}\to 0$. In particular, $\|w_k(\cdot,T)\|_{L^\infty(\Omega)}\to 0$ holds.
Using this fact and multiplying the above equation by $w_k$ and making integration by parts we infer convergence $w_k\to 0$ in $L^2(0,T;H^1(\Omega))$.
\end{proof}

Hereafter, we denote $Y= L^2(0,T;H^1_0(\Omega))\cap L^\infty( Q)$ and  $G:L^{\pd}(0,T;L^{\qd}(\Omega)) \longrightarrow Y$ as the mapping associating to each control the corresponding state $G(u) = y_u$.

\begin{theorem}\label{T2.2}
The mapping $G$ is of class $C^2$. Moreover, for every $u,v,v_1,v_2\in L^{\pd}(0,T;L^{\qd}(\Omega))$,  we have that $z_v=G'(u)v$ is the solution of
\begin{equation}\label{E2.4}
\left\{\begin{array}{rcll}
\displaystyle\frac{\partial z}{\partial t} + A z + \displaystyle\frac{\partial f}{\partial y}(x,t,y_u)z &=& v&\mbox{ in }Q,\\
 z& =& 0&\mbox{ on }\Sigma,\\
  z(0)& = &0&\mbox{ in }\Omega,
  \end{array}
  \right.
\end{equation}
and $z_{v_1,v_2}=G''(u)(v_1,v_2)$ solves the equation
\begin{equation}\notag
\left\{\begin{array}{rcll}
\displaystyle\frac{\partial z}{\partial t} + A z + \displaystyle\frac{\partial f}{\partial y}(x,t,y_u)z &=& -\displaystyle\frac{\partial^2 f}{\partial y^2}(x,t,y_u)z_{v_1}z_{v_2} &\mbox{ in }Q,\\
 z& =& 0&\mbox{ on }\Sigma,\\
  z(0)& = &0&\mbox{ in }\Omega,
  \end{array}
  \right.
\end{equation}
where $z_{v_i} =G'(u)v_i$, $i=1,2$.
Moreover $z_v$ and $z_{v_1,v_2}$ are continuous functions in $\bar Q$.
\end{theorem}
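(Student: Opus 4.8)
The plan is to derive every assertion from the implicit function theorem applied to the state equation \eqref{E1.1}, rewritten as an operator equation in which the control enters linearly. Let $\mathcal{S}$ denote the solution operator $g\mapsto z$ of the linear problem $\partial_t z + Az = g$, $z|_\Sigma=0$, $z(0)=0$; by \cite[Theorem III-10.1]{LSU} it maps $L^{\pd}(0,T;L^{\qd}(\Omega))$ boundedly into a parabolic Hölder space $C^{\gamma,\gamma/2}(\bar Q)$ for some $\gamma\in(0,1)$, exactly as used in the proof of Theorem \ref{T2.1}. Writing $\eta\in L^\infty(Q)$ for the solution of the linear problem with right-hand side zero and initial datum $y_0$, the state $y_u$ is characterized by the fixed-point identity $y=\eta+\mathcal{S}(u-f(\cdot,\cdot,y))$. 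I would therefore set
\[
\Phi(u,y)=y-\eta-\mathcal{S}u+\mathcal{S}\big(f(\cdot,\cdot,y)\big),\qquad \Phi:L^{\pd}(0,T;L^{\qd}(\Omega))\times L^\infty(Q)\to L^\infty(Q),
\]
so that $\Phi(u,y_u)=0$ for every $u$ by Theorem \ref{T2.1}. (I work in $L^\infty(Q)$ rather than $C(\bar Q)$ because $\eta$, and hence $y_u$, need not be continuous up to $t=0$.)

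The first step is to show that $\Phi$ is of class $C^2$. Since $\mathcal{S}$ is bounded and linear, this reduces to the $C^2$ regularity of the Nemytskii operator $N_f(y)=f(\cdot,\cdot,y)$ from $L^\infty(Q)$ into itself. Using assumption (A2), I would prove that $N_f$ is twice Fréchet differentiable with $N_f'(y)h=\frac{\partial f}{\partial y}(\cdot,\cdot,y)h$ and $N_f''(y)(h_1,h_2)=\frac{\partial^2 f}{\partial y^2}(\cdot,\cdot,y)h_1h_2$: the uniform bounds on $\partial_y f$ and $\partial_{yy}f$ over bounded sets control the first-order Taylor remainders in the sup-norm, and the uniform continuity of $\partial_{yy}f$ postulated in (A2) yields continuity of $y\mapsto N_f''(y)$. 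This is the step I expect to be most delicate, since superposition operators are generically nondifferentiable between $L^p$ spaces; it succeeds here precisely because the smoothing of $\mathcal{S}$ lets me keep the state in $L^\infty(Q)$ and measure everything in the sup-norm, where the pointwise estimates of (A2) are effective.

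Next I would verify that $D_y\Phi(u,y_u)h=h+\mathcal{S}\big(\frac{\partial f}{\partial y}(\cdot,\cdot,y_u)h\big)$ is an isomorphism of $L^\infty(Q)$. Setting $a=\frac{\partial f}{\partial y}(\cdot,\cdot,y_u)\in L^\infty(Q)$, the operator $h\mapsto\mathcal{S}(ah)$ sends $L^\infty(Q)$ into $C^{\gamma,\gamma/2}(\bar Q)$ and is therefore compact on $L^\infty(Q)$; hence $D_y\Phi(u,y_u)$ is a compact perturbation of the identity, and by the Fredholm alternative it is enough to prove injectivity. If $h+\mathcal{S}(ah)=0$, then $h=-\mathcal{S}(ah)$ solves the homogeneous linearized equation \eqref{E2.4} with $v=0$ and zero initial and boundary data; uniqueness for this linear parabolic equation, which follows from the energy method and the lower bound $\partial_y f\geq C_f$ of \eqref{E2.1}, forces $h=0$.

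With these two facts the implicit function theorem shows that $G$ is of class $C^2$ and that $G'(u)=-\big(D_y\Phi(u,y_u)\big)^{-1}D_u\Phi(u,y_u)$. Differentiating the identity $\Phi(u,G(u))=0$ once and twice, and using that by definition of $\mathcal{S}$ a function of the form $\mathcal{S}g$ solves $(\partial_t+A)w=g$ with homogeneous initial and boundary data, I would recover that $z_v=G'(u)v$ solves \eqref{E2.4} and that $z_{v_1,v_2}=G''(u)(v_1,v_2)$ solves the stated second-order equation with source $-\frac{\partial^2 f}{\partial y^2}(\cdot,\cdot,y_u)z_{v_1}z_{v_2}$. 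Finally, continuity on $\bar Q$ follows from \cite[Theorem III-10.1]{LSU}: the right-hand side $v$ of \eqref{E2.4} lies in $L^{\pd}(0,T;L^{\qd}(\Omega))$, so $z_v\in C^{\gamma,\gamma/2}(\bar Q)\subset C(\bar Q)$, and the source of the second equation lies in $L^\infty(Q)$ because $z_{v_1},z_{v_2}\in C(\bar Q)$ and $\partial_{yy}f$ is bounded on bounded sets, whence $z_{v_1,v_2}\in C(\bar Q)$ as well.
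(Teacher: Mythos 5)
Your proposal is correct, and it follows the same basic strategy as the proof the paper points to (Theorem 5.1 of the cited Casas--Tr\"oltzsch 2012 paper): establish $C^2$ regularity of the Nemytskii operator of $f$ in the sup-norm using (A2), apply the implicit function theorem, and read off the equations for $G'$ and $G''$ by differentiating the identity satisfied by $G$. The one genuine difference is in how you set up the IFT: the cited proof works directly with the differential operator, defining $\mathcal{F}(y,u)=\big(\partial_t y+Ay+f(\cdot,\cdot,y)-u,\,y(\cdot,0)-y_0\big)$ between anisotropic function spaces and obtaining the isomorphism of $D_y\mathcal{F}$ from linear parabolic well-posedness, whereas you pre-invert the heat operator via $\mathcal{S}$ and prove invertibility of $I+\mathcal{S}\big(\frac{\partial f}{\partial y}(\cdot,\cdot,y_u)\,\cdot\big)$ on $L^\infty(Q)$ by compactness (H\"older smoothing) plus the Fredholm alternative plus uniqueness. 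Your route buys a softer functional-analytic isomorphism argument at the price of needing the smoothing estimate twice; the direct route avoids Fredholm theory but requires formulating the correct solution space for the operator $\mathcal{F}$. Two small points you should patch, neither fatal: (i) $N_f$ does not map $L^\infty(Q)$ into itself, since $f(\cdot,\cdot,0)$ is only assumed in $L^{\pd}(0,T;L^{\qd}(\Omega))$; the correct statement is that $N_f:L^\infty(Q)\to L^{\pd}(0,T;L^{\qd}(\Omega))$ is $C^2$ (differences $N_f(y+h)-N_f(y)$ do land in $L^\infty(Q)$, which embeds into $L^{\pd}(0,T;L^{\qd}(\Omega))$ as $Q$ is bounded, so your sup-norm Taylor estimates go through unchanged). (ii) Your IFT yields $G\in C^2$ with values in $L^\infty(Q)$ only, while $G$ is declared to map into $Y=L^2(0,T;H^1_0(\Omega))\cap L^\infty(Q)$; this is recovered in one line by writing $G(u)=\eta+\mathcal{S}\big(u-N_f(G(u))\big)$ and using that $\mathcal{S}$ is also bounded into $L^2(0,T;H^1_0(\Omega))$. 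You should also note, as is standard, that the locally defined implicit function coincides with $u\mapsto y_u$ globally, which follows from $\Phi(u,y_u)=0$ together with the Lipschitz estimate of Theorem \ref{T2.1}.
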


For the proof the reader is referred, for instance, to \cite[Theorem 5.1]{Casas-Troltzsch2012}.

From the classical theory for linear parabolic partial differential equations, we know that for every $v\in L^2(Q)$ there exists a unique solution $z_v$ of \eqref{E2.4} in the space $C([0,T],L^2(\Omega)) \cap L^2(0,T;H^1_0(\Omega))$. Therefore the linear mapping $G'(u)$ can be extended to a continuous linear mapping $G'(u):L^2(Q)\to C([0,T],L^2(\Omega)) \cap L^2(0,T;H^1_0(\Omega))$.

The following estimates for $z_v$ will be used in the next sections.
\begin{lemma}\label{L2.3}Let $u\in \uad$ and $v\in L^2(Q)$ be arbitrary, and let $z_v = G'(u)v$ be the solution of \eqref{E2.4}. Then, there exist constants $C_{Q,2}$ and $C_{Q,1}$ independent of $u$ and $v$ such that
\begin{eqnarray}
\label{E2.5}\|z_v\|_{L^2(Q)}+ \|z_v(\cdot,T)\|_{L^2(\Omega)}&\leq& C_{Q,2} \|v\|_{L^2(Q)},\\
\label{E2.6}\|z_v\|_{L^1(Q)} + \|z_v(\cdot,T)\|_{L^1(\Omega)}  &\leq& C_{Q,1} \|v\|_{L^1(Q)}.
\end{eqnarray}
If, further, $v\in L^{\pd}(0,T;L^{\qd}(\Omega))$, then there exists a constant $C_{Q,\infty}$ independent of $u$ and $v$ such that
\begin{eqnarray}
\label{E2.7}\|z_v\|_{C(\bar Q)}&\leq& C_{Q,\infty} \|v\|_{L^{\pd}(0,T;L^{\qd}(\Omega))}.
\end{eqnarray}
\end{lemma}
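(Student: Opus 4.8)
The plan is to reduce everything to the single structural fact that the zeroth order coefficient of \eqref{E2.4} is bounded uniformly in $u\in\uad$. Indeed, for every $u\in\uad$ Theorem~\ref{T2.1} gives $\|y_u\|_{L^\infty(Q)}\le M_\infty$, so by (A2) the coefficient $a_u(x,t):=\frac{\partial f}{\partial y}(x,t,y_u(x,t))$ satisfies $C_f\le a_u(x,t)\le C_{f,M_\infty}$ for a.a.\ $(x,t)\in Q$, with bounds independent of $u$. Every constant produced below will then depend only on $\lambda_A$, $\|b_j\|_{L^\infty(Q)}$, $C_f$, $C_{f,M_\infty}$, $\Omega$ and $T$, which is exactly the uniformity claimed. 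For \eqref{E2.5} I would run the standard energy argument: testing \eqref{E2.4} with $z_v$ and integrating over $\Omega$, the principal part is bounded below by $\lambda_A\|\nabla z_v\|_{L^2(\Omega)}^2$ thanks to \eqref{E2.0}, the convection term $\sum_j\int_\Omega b_j(\partial_{x_j}z_v)z_v$ is absorbed by Young's inequality into $\frac{\lambda_A}{2}\|\nabla z_v\|_{L^2(\Omega)}^2$ plus a multiple of $\|z_v\|_{L^2(\Omega)}^2$, the reaction term is $\ge C_f\|z_v\|_{L^2(\Omega)}^2$, and the source splits as $\int_\Omega v z_v\le\frac12\|v(t)\|_{L^2(\Omega)}^2+\frac12\|z_v(t)\|_{L^2(\Omega)}^2$. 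This produces a differential inequality $\frac{d}{dt}\|z_v(t)\|_{L^2(\Omega)}^2\le C\|z_v(t)\|_{L^2(\Omega)}^2+\|v(t)\|_{L^2(\Omega)}^2$, and Gr\"onwall's lemma together with $z_v(0)=0$ gives $\max_{t}\|z_v(t)\|_{L^2(\Omega)}^2\le e^{CT}\|v\|_{L^2(Q)}^2$; evaluating at $t=T$ and integrating in $t$ yields both terms of \eqref{E2.5}.

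The estimate \eqref{E2.6} is the delicate point, and I would prove it by duality against $L^\infty$. Writing $\|z_v\|_{L^1(Q)}+\|z_v(\cdot,T)\|_{L^1(\Omega)}=\sup\big\{\int_Q z_v g+\int_\Omega z_v(\cdot,T)g_T\big\}$, the supremum taken over $\|g\|_{L^\infty(Q)}\le1$ and $\|g_T\|_{L^\infty(\Omega)}\le1$, I would associate with each such pair the solution $\phi$ of the backward adjoint problem $-\partial_t\phi+A^*\phi+a_u\phi=g$ in $Q$, $\phi=0$ on $\Sigma$, $\phi(\cdot,T)=g_T$, where $A^*$ is the formal adjoint of $A$. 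Multiplying \eqref{E2.4} by $\phi$, integrating over $Q$, and integrating by parts in time (using $z_v(0)=0$ and $\phi(\cdot,T)=g_T$) and in space (the boundary terms vanish since $z_v=\phi=0$ on $\Sigma$), one obtains the duality identity $\int_Q z_vg+\int_\Omega z_v(\cdot,T)g_T=\int_Q v\phi$.

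Consequently $\int_Q z_vg+\int_\Omega z_v(\cdot,T)g_T\le\|v\|_{L^1(Q)}\|\phi\|_{L^\infty(Q)}$, and the whole estimate hinges on bounding $\|\phi\|_{L^\infty(Q)}$ by a constant independent of $u$, $g$ and $g_T$. Reversing time by $s=T-t$ turns the adjoint problem into a forward parabolic equation of the same class as \eqref{E1.1} (second order elliptic part with ellipticity constant $\lambda_A$ and bounded measurable lower order coefficients, all bounded uniformly in $u$), with right-hand side $g\in L^\infty(Q)\hookrightarrow L^{\pd}(0,T;L^{\qd}(\Omega))$ of norm $\le C\|g\|_{L^\infty(Q)}\le C$ and terminal datum $g_T\in L^\infty(\Omega)$ of norm $\le1$. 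The $L^\infty$ estimate of Theorem~\ref{T2.1} therefore gives $\|\phi\|_{L^\infty(Q)}\le C$ with $C$ uniform, and taking the supremum over $g,g_T$ delivers \eqref{E2.6}.

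Finally, \eqref{E2.7} needs no new idea: when $v\in L^{\pd}(0,T;L^{\qd}(\Omega))$, equation \eqref{E2.4} is a linear parabolic equation with the same uniformly bounded coefficients and right-hand side $v$, so the H\"older estimate \cite[Theorem III-10.1]{LSU} already invoked in the proof of Theorem~\ref{T2.1} gives $\|z_v\|_{C(\bar Q)}\le\|z_v\|_{C^{\gamma,\gamma/2}(\bar Q)}\le C_{Q,\infty}\|v\|_{L^{\pd}(0,T;L^{\qd}(\Omega))}$, with a constant depending only on the uniform coefficient bounds. The main obstacle is \eqref{E2.6}: one must both set up the duality correctly and, above all, verify that the $L^\infty$ bound on the adjoint state is independent of $u$, which is precisely where the uniform bound $\|y_u\|_{L^\infty(Q)}\le M_\infty$ on the linearization point is used.
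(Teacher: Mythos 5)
Your proposal is correct, and for the delicate estimate \eqref{E2.6} it takes a slightly more unified route than the paper. The paper splits \eqref{E2.6} in two: the bound on $\|z_v\|_{L^1(Q)}$ is imported from the measure-data literature (Casas--Kunisch, Boccardo, Casas), and only the terminal term $\|z_v(\cdot,T)\|_{L^1(\Omega)}$ is proved by duality, using exactly your adjoint problem but with zero interior source and terminal datum $\psi_T=\mathrm{sign}(z_v(\cdot,T))$. You instead put both an interior source $g$ and a terminal datum $g_T$ into a single backward problem, which yields both $L^1$ terms simultaneously from one uniform bound $\|\phi\|_{L^\infty(Q)}\le C$; this makes the proof self-contained where the paper relies on a citation, at the cost of needing the slightly stronger linear $L^\infty$ estimate with a nonzero right-hand side (which is still classical, since $g\in L^\infty(Q)\hookrightarrow L^{\pd}(0,T;L^{\qd}(\Omega))$ and, after time reversal, the problem is linear with coefficients bounded uniformly in $u\in\uad$ by \eqref{E2.8}). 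Likewise, for \eqref{E2.5} you spell out the energy/Gr\"onwall argument that the paper dismisses as classical with a reference to Lady\v{z}enskaja--Solonnikov--Ural'ceva, and for \eqref{E2.7} you invoke the same H\"older estimate the paper uses. Two cosmetic points: what is actually needed is $|a_u|\le C_{f,M_\infty}$, i.e.\ \eqref{E2.8}, rather than the two-sided inequality $C_f\le a_u\le C_{f,M_\infty}$ you wrote (harmless, since the uniform bound is what enters); and the uniform $L^\infty$ bound on $\phi$ should be attributed to the linear theory underlying Theorem \ref{T2.1} (and used again by the paper in \eqref{E2.14} and at the end of its own proof) rather than to Theorem \ref{T2.1} itself, which is stated for the semilinear equation. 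Neither point affects the validity of your argument.
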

\begin{proof}
First let us note that from Theorem \ref{T2.1} and our assumption on $f$ (A2) we have that
\begin{equation}\left|\frac{\partial^j f}{\partial y^j}(x,t,y_u(x,t))\right|\leq C_{f,M_\infty}\ \forall u\in U_{ad}\mbox{ and a.e. } (x,t)\in Q,\ j = 1,2.
\label{E2.8}
\end{equation}
Then \eqref{E2.5} and \eqref{E2.7} are classical; see for instance \cite[Chapter III]{LSU}.

The estimate \eqref{E2.6} for $\|z_v\|_{L^1(Q)}$ follows from \cite{Casas-Kunisch2016}; see also \cite{Boccardo1997,Casas-1997}.

To prove the estimate for $\|z_v(\cdot,T)\|_{L^1(\Omega)}$ we proceed as follows.
Consider the function $\psi_T=\mathrm{sign}(z_v(\cdot,T))\in L^\infty(\Omega)$ and let $\psi\in L^\infty(Q)\cap L^2(0,T;H_0^1(\Omega))$ be the unique solution of the problem
\[
\left\{\begin{array}{rcll}
-\displaystyle\frac{\partial \psi}{\partial t} + A^* \psi + \frac{\partial f}{\partial y}(x,t,y_u) \psi &=& 0&\mbox{ in }Q,\\
 \psi& =& 0&\mbox{ on }\Sigma,\\
  \psi(T)& = &\psi_T&\mbox{ in }\Omega,
  \end{array}
  \right.
\]
where $A^*$ is the adjoint of $A$ given by
\begin{equation}\label{E2.9}
A^*\psi =-\sum_{i,j=1}^n \partial_{x_j}(a_{j,i}(x)\partial_{x_i} \psi) - \sum_{j = 1}^n \partial_{x_j}( b_j(x,t) y).
\end{equation}
Multiplying the equation satisfied by $z_v$ by $\psi$ and integrating over $Q$, we obtain
\begin{equation}\label{E2.10}
  \int_Q \psi\left(\partial_t z_v + A z_v + \frac{\partial f}{\partial y}(x,t,y_u) z_v\right) dxdt = \int_Q v\psi dxdt.
\end{equation}
Integrating by parts in the first integral, we have
\begin{align*}
 & \int_Q \psi\big(\partial_t z_v + A z_v + \frac{\partial f}{\partial y}(x,t,y_u) z_v\big) dxdt
 =  \int_\Omega \left(\psi(x,T)z_v(x,T) -\psi(x,0)z_v(x,0)\right)dx \\
  &+ \int_Q z_v \left(-\partial_t \psi+ A^* \psi + \frac{\partial f}{\partial y}(x,t,y_u) \psi\right) dxdt \\
 = & \int_\Omega \psi_T(x) z_v(x,T)dx   =  \int_\Omega \mathrm{sign}(z_v(x,T))z_v(x,T) dx = \|z_v(\cdot,T)\|_{L^1(\Omega)}.
\end{align*}
Now using \eqref{E2.10}, we have that
\[\|z_v(\cdot,T)\|_{L^1(\Omega)}\leq \|\psi\|_{L^\infty(Q)}\|v\|_{L^1(Q)}.\]
Finally, it is enough to realize that  for some constant $C$ we have
\[\|\psi\|_{L^\infty(Q)}\leq C \|\psi_T\|_{L^\infty(\Omega)} = C\]
and the proof is complete.
\end{proof}

The following technical result will be used in the proof of Theorem \ref{T3.1}.
\begin{lemma}\label{L2.4}Consider $u,\,\bar u\in U_{ad}$ with associated states $y_u$ and $\bar y$, respectively. Set $z_{u-\bar u}=G'(\bar u)(u-\bar u)$ and consider the constants $C_{f,M_\infty}$ satisfying \eqref{E2.8} and $C_{Q,\infty}$ introduced in Lemma \ref{L2.3}. Then the following estimates hold:

\begin{align}
\text{If } \|y_{u}-&\bar y\|_{L^\infty(Q)} < \frac{2}{C_{f,M_\infty}C_{Q,\infty}|\Omega|^{1/\qd} T^{1/\pd}}  \text{ then}\label{E2.11}\\
&\|z_{u-\bar u}\|_{C(\bar Q)} < 2\|y_{u}-\bar y\|_{L^\infty(Q)}.\notag\\ \notag   \\
\text{If } \|y_{u}-&\bar y\|_{L^\infty(Q)} < \frac{1}{C_{f,M_\infty}C_{Q,\infty}}  \text{ then} \label{E2.12}\\
&\|z_{u- \bar u}\|_{L^2(Q)} +\nu_\Omega \|z_{u- \bar u}(\cdot,T)\|_{L^2(\Omega)}\notag\\
&\geq \frac{1}{2}\Big(\|y_{u}- \bar y\|_{L^2(Q)} + \nu_\Omega \|y_{u}(\cdot,T)-\bar y(\cdot,T)\|_{L^2(\Omega)}\Big).\notag
\end{align}
\end{lemma}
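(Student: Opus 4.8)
The plan is to compare the genuine state increment $w:=y_u-\bar y$ with its linearization $z_{u-\bar u}=G'(\bar u)(u-\bar u)$ and to show that the remainder $e:=z_{u-\bar u}-w$ is \emph{quadratically} small in $w$. First, as in the proof of Theorem \ref{T2.1}, subtracting the state equations for $y_u$ and $\bar y$ and applying the mean value theorem produces a measurable $\hat y=\bar y+\theta(y_u-\bar y)$ with $0<\theta<1$ such that
\[\partial_t w+Aw+\tfrac{\partial f}{\partial y}(x,t,\hat y)\,w=u-\bar u,\quad w(0)=0,\ w|_\Sigma=0.\]
Since $z_{u-\bar u}$ solves \eqref{E2.4} at $\bar u$ with the \emph{same} right-hand side $u-\bar u$ and the same homogeneous data, the difference $e=z_{u-\bar u}-w$ satisfies
\[\partial_t e+Ae+\tfrac{\partial f}{\partial y}(x,t,\bar y)\,e=g,\quad e(0)=0,\ e|_\Sigma=0,\]
with forcing $g=\big(\tfrac{\partial f}{\partial y}(x,t,\hat y)-\tfrac{\partial f}{\partial y}(x,t,\bar y)\big)w$. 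The crucial structural observation is that this is exactly the linearized equation \eqref{E2.4} at $\bar u$, so $e=G'(\bar u)g$ and the estimates of Lemma \ref{L2.3} apply directly to $e$. Note also that $w$, $z_{u-\bar u}$, hence $e$, are continuous on $\bar Q$ by Theorems \ref{T2.1} and \ref{T2.2}, so $\|w\|_{C(\bar Q)}=\|y_u-\bar y\|_{L^\infty(Q)}$.

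The decisive step is the pointwise bound on $g$. Applying the mean value theorem once more to $\partial f/\partial y$, using \eqref{E2.8} with $j=2$ together with $|\hat y-\bar y|=\theta|y_u-\bar y|\le|w|$ and the fact that $\bar y$, $\hat y$ and the intermediate point all lie in $[-M_\infty,M_\infty]$, yields $|g|\le C_{f,M_\infty}|w|^2$ almost everywhere. This quadratic smallness is precisely what turns a smallness hypothesis on $\|w\|_{L^\infty(Q)}$ into a comparison in which the remainder is a controlled fraction of $\|w\|$.

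For \eqref{E2.11} I would feed $g$ into \eqref{E2.7}: $\|e\|_{C(\bar Q)}\le C_{Q,\infty}\|g\|_{L^{\pd}(0,T;L^{\qd}(\Omega))}$, and then bound $\|g\|_{L^{\pd}(0,T;L^{\qd}(\Omega))}\le C_{f,M_\infty}\,\||w|^2\|_{L^{\pd}(0,T;L^{\qd}(\Omega))}\le C_{f,M_\infty}|\Omega|^{1/\qd}T^{1/\pd}\|w\|_{L^\infty(Q)}^2$, using the embedding $L^\infty(Q)\hookrightarrow L^{\pd}(0,T;L^{\qd}(\Omega))$ with constant $|\Omega|^{1/\qd}T^{1/\pd}$ (which is exactly the constant appearing in the threshold). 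Under the stated bound on $\|w\|_{L^\infty(Q)}$ the factor $C_{Q,\infty}C_{f,M_\infty}|\Omega|^{1/\qd}T^{1/\pd}\|w\|_{L^\infty(Q)}$ is dominated, so $\|e\|_{C(\bar Q)}$ is smaller than $\|w\|_{L^\infty(Q)}$, and $\|z_{u-\bar u}\|_{C(\bar Q)}\le\|w\|_{L^\infty(Q)}+\|e\|_{C(\bar Q)}$ gives \eqref{E2.11}. For \eqref{E2.12} I would instead use the $L^2$ estimate \eqref{E2.5} on $e$, giving $\|e\|_{L^2(Q)}+\|e(\cdot,T)\|_{L^2(\Omega)}\le C_{Q,2}\|g\|_{L^2(Q)}\le C_{f,M_\infty}C_{Q,2}\|w\|_{L^\infty(Q)}\|w\|_{L^2(Q)}$ (using $|w|^2\le\|w\|_{L^\infty(Q)}|w|$, with no measure factors). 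The smallness threshold is calibrated via the relevant Lemma \ref{L2.3} constant so that this remainder is at most $\tfrac12\big(\|w\|_{L^2(Q)}+\nu_\Omega\|w(\cdot,T)\|_{L^2(\Omega)}\big)$; then writing $w=z_{u-\bar u}-e$ and using the reverse triangle inequality on both the $L^2(Q)$ and the $L^2(\Omega)$ terms yields the lower bound \eqref{E2.12}.

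The main obstacle is the content of the second paragraph: recognizing that $z_{u-\bar u}-(y_u-\bar y)$ solves the linearized equation \eqref{E2.4}, so that Lemma \ref{L2.3} is legitimately applicable to it, and then extracting the pointwise quadratic estimate $|g|\le C_{f,M_\infty}|w|^2$ from the uniform bound on $\partial^2 f/\partial y^2$. Once these are in hand, each conclusion reduces to selecting the appropriate estimate of Lemma \ref{L2.3} and carrying out the norm conversions; the only point requiring care is the exact tracking of the numerical constants so that the remainder is absorbed within the factors $\tfrac12$ and $2$ of the stated thresholds.
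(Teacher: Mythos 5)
Your architecture is exactly the paper's: the paper sets $\eta=y_u-(\bar y+z_{u-\bar u})$ (your $-e$), shows it solves the linearized equation \eqref{E2.4} at $\bar u$ with homogeneous data and a source that is quadratic in $w=y_u-\bar y$, and then applies \eqref{E2.7} resp.\ \eqref{E2.5} together with the triangle inequality, just as you propose. But there is a quantitative slip that makes your absorption step fail at the stated thresholds. By applying the mean value theorem twice you obtain the source $g=\bigl(\tfrac{\partial f}{\partial y}(x,t,\hat y)-\tfrac{\partial f}{\partial y}(x,t,\bar y)\bigr)w$ with only $|g|\le C_{f,M_\infty}|w|^2$, since nothing is known about $\theta$ beyond $0<\theta<1$. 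The paper instead uses a second-order Taylor expansion of $f$ around $\bar y$, which produces the source $-\tfrac12\tfrac{\partial^2 f}{\partial y^2}(x,t,\hat y)(y_u-\bar y)^2$, i.e.\ the bound $\tfrac12 C_{f,M_\infty}|w|^2$ -- and the thresholds in \eqref{E2.11} and \eqref{E2.12} are calibrated precisely to that factor $\tfrac12$. With your constant, the hypothesis of \eqref{E2.11} only gives $C_{f,M_\infty}C_{Q,\infty}|\Omega|^{1/\qd}T^{1/\pd}\|w\|_{L^\infty(Q)}<2$, hence $\|e\|_{C(\bar Q)}<2\|w\|_{L^\infty(Q)}$ and $\|z_{u-\bar u}\|_{C(\bar Q)}<3\|w\|_{L^\infty(Q)}$, not the claimed $2\|w\|_{L^\infty(Q)}$; your assertion that ``$\|e\|_{C(\bar Q)}$ is smaller than $\|w\|_{L^\infty(Q)}$'' requires the factor to be $<1$, which the stated threshold does not provide. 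The same lost factor breaks \eqref{E2.12}: you only get $\|e\|_{L^2(Q)}+\nu_\Omega\|e(\cdot,T)\|_{L^2(\Omega)}<\|w\|_{L^2(Q)}$ under the stated smallness, and the reverse triangle inequality then yields the useless lower bound $0$ rather than $\tfrac12\bigl(\|w\|_{L^2(Q)}+\nu_\Omega\|w(\cdot,T)\|_{L^2(\Omega)}\bigr)$; your phrase about the threshold being ``calibrated'' papers over exactly this deficit of a factor $2$.

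The repair is one line: replace the double mean value theorem by the exact second-order (Lagrange or integral) remainder, e.g.\ $g=\bigl(\int_0^1[\tfrac{\partial f}{\partial y}(x,t,\bar y+sw)-\tfrac{\partial f}{\partial y}(x,t,\bar y)]\,ds\bigr)w$, whence $|g|\le C_{f,M_\infty}|w|^2\int_0^1 s\,ds=\tfrac12 C_{f,M_\infty}|w|^2$; with that $\tfrac12$ your two absorption computations close exactly as in the paper. A side remark in your favor: in the second estimate you correctly use $C_{Q,2}$ from \eqref{E2.5}, whereas the lemma's statement writes $C_{Q,\infty}$ in the threshold of \eqref{E2.12}; the paper's own proof (and the choice of $\varepsilon$ in \eqref{E3.17}) shows the intended constant is indeed $C_{Q,2}$.
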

\begin{proof}
Define $\eta = y_u-(\bar y+ z_{u-\bar u})$. The function $\eta$ satisfies the equation
\[
\left\{\begin{array}{rcll}
 \displaystyle\frac{\partial \eta}{\partial t} +
 A\eta  + f(x,t,y_u) - f(x,t,\bar y) -\displaystyle\frac{\partial f}{\partial y}(x,t,\bar y) z_{u- \bar u} &=& 0&\mbox{ in }Q,\\
 \eta& =& 0&\mbox{ on }\Sigma,\\
 \eta(0) & = & 0&\mbox{ in }\Omega.
  \end{array}
  \right.
\]
Using a second order Taylor expansion, we have that there exists a measurable function $0<\theta(x,t)<1$ such that, if we name $\hat y=  \bar y+\theta (y_u-\bar y)$, we have that
\begin{equation}
\left\{\begin{array}{rcll}
 \displaystyle\frac{\partial \eta}{\partial t} +
 A\eta  + \displaystyle\frac{\partial f}{\partial y}(x,t,\bar y) \eta & = & -\displaystyle\frac{1}{2}\displaystyle\frac{\partial^2 f}{\partial y^2}(x,t,\hat y)(y_u-\bar y)^2&\mbox{ in }Q,\\
  \eta &= &0 &\mbox{ on }\Sigma,\\
  \eta(0) &=& 0&\mbox{ in }\Omega.
  \end{array}
  \right.
  \notag
\end{equation}

Let us prove the first estimate.
With the help of Assumption (A2), we deduce from \eqref{E2.7} and \eqref{E2.8} that
\begin{align}\|\eta\|_{C(\bar Q)}\leq \frac{1}{2}C_{f,M_\infty} C_{Q,\infty}|\Omega|^{1/\qd} T^{1/\pd} \|y_u-\bar y\|_{L^\infty(Q)}^2.
\notag
\end{align}
Using this and \eqref{E2.11}, we infer
\begin{align}
\|z_{u - \bar u}\|_{C(\bar Q)}  \le & \|\eta\|_{C(\bar Q)} + \|y_u-\bar y\|_{L^\infty(Q)} \notag\\
  \le  &\frac{1}{2}C_{f,M_\infty} C_{Q,\infty}|\Omega|^{1/\qd} T^{1/\pd} \|y_u-\bar y\|_{L^\infty(Q)}^2 + \|y_u-\bar y\|_{L^\infty(Q)} \notag \\ \leq & 2\|y_u-\bar y\|_{L^\infty(Q)}.\notag
\end{align}

For the second inequality, notice that using the uniform boundness of the admissible states, assumption (A2) and \eqref{E2.5}, we have that
\[\|\eta\|_{L^2(Q)}+\nu_\Omega \|\eta(\cdot,T)\|_{L^2(\Omega)}\leq \frac{1}{2} C_{Q,2}C_{f,M_\infty} \|y_u-\bar y\|_{L^\infty(Q)} \|y_u-\bar y\|_{L^2(Q)}.\]
Finally, using \eqref{E2.12}, we have that
\begin{align*}
\|y_u- &\bar y\|_{L^2(Q)}+ \nu_\Omega \|y_u(\cdot,T)-\bar y (\cdot,T)\|_{L^2(\Omega)}\notag\\
\leq &\|\eta\|_{L^2(Q)}+ \nu_\Omega  \|\eta(\cdot,T)\|_{L^2(\Omega)} +\|z_{u- \bar u}\|_{L^2(Q)}+ \nu_\Omega  \|z_{u-\bar u}(\cdot,T)\|_{L^2(\Omega)}\notag\\
 \leq &  \frac{1}{2} C_{Q,2}C_{f,M_\infty} \|y_u-\bar y\|_{L^\infty(Q)} \|y_u- \bar y\|_{L^2(Q)}+\|z_{u- \bar u}\|_{L^2(Q)} + \nu_\Omega \|z_{u-\bar u}(\cdot,T)\|_{L^2(\Omega)}\notag \\
 \leq &  \frac{1}{2}  \|y_u- \bar y\|_{L^2(Q)}+\|z_{u- \bar u}\|_{L^2(Q)} + \nu_\Omega \|z_{u-\bar u}(\cdot,T)\|_{L^2(\Omega)},\notag
\end{align*}
and the second inequality follows.
\end{proof}

\subsection{First and second order optimality conditions for \Pb}
We recall the definition of the cost functional $J(u) = F(u) + \mu j(u)$. Before establishing the optimality conditions satisfied by a local solution we address the differentiability of the functional $F$.

The next theorem follows from the chain rule, Theorem \ref{T2.2} and assumptions (A2) and (A3).
\begin{theorem}\label{T2.5}
The functional $F:L^{\pd}(0,T;L^{\qd}(\Omega)) \longrightarrow \mathbb{R}$ is of class $C^2$ and for every $u,v,v_1,v_2\in L^{\pd}(0,T;L^{\qd}(\Omega))$
\begin{align}
 F'(u)v &= \int_Q \varphi_u v \, dx, \notag\\
 F''(u)(v_1,v_2) &= \int_Q\left(\displaystyle\frac{\partial^2 L}{\partial y^2} (x,t,y_u) -\varphi_u\frac{\partial^2 f}{\partial y^2} (x,t,y_u)\right)z_{v_1}z_{v_2}\, dx\,dt  \notag \\
 & +\nu_\Omega\int_\Omega \displaystyle\frac{\partial^2 L_\Omega}{\partial y^2} (x,y_u(x,T)) z_{v_1}(x,T)z_{v_2}(x,T)\, dx.\notag
\end{align}
where $z_{v_i}=G'(u)v_i$, $i=1,2$ and   $\varphi_u\in Y$ is the adjoint state associated to $u$, i.e., it is the solution of
\begin{equation}\label{E2.13}
\left\{\begin{array}{rcll}
-\displaystyle\frac{\partial \varphi}{\partial t} + A^* \varphi + \displaystyle\frac{\partial f}{\partial y}(x,t,y_u)\varphi &=& \displaystyle\frac{\partial L}{\partial y}(x,t,y_u)&\mbox{ in }Q,\\
 \varphi& =& 0&\mbox{ on }\Sigma,\\
 \varphi(\cdot,T) &=& \nu_\Omega\displaystyle\frac{\partial L_\Omega}{\partial y}(x,y_u(\cdot, T))&\mbox{ in }\Omega,
  \end{array}
  \right.
\end{equation}
and $A^*$ denotes the adjoint operator of $A$ introduced in \eqref{E2.9}.

\end{theorem}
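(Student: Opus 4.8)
The plan is to write $F$ as the composition $F=\mathcal{L}\circ G$, where $G$ is the control-to-state map of Theorem~\ref{T2.2} and $\mathcal{L}$ is the ``cost'' functional on the state space, $\mathcal{L}(y)=\int_Q L(x,t,y)\,dx\,dt+\nu_\Omega\int_\Omega L_\Omega(x,y(\cdot,T))\,dx$. Theorem~\ref{T2.2} already provides that $G$ is of class $C^2$ from $L^{\pd}(0,T;L^{\qd}(\Omega))$ into $Y$ (this is where assumption (A2) enters, through the second-order sensitivity equation), and moreover that $z_v=G'(u)v$ and $z_{v_1,v_2}=G''(u)(v_1,v_2)$ are continuous on $\bar Q$; so it suffices to check that $\mathcal{L}$ is $C^2$ near the admissible states and then apply the chain rule. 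Since every admissible state obeys $\|y_u\|_{L^\infty(Q)}\le M_\infty$ by Theorem~\ref{T2.1}, I would restrict attention to the bounded set $\{\,y:\|y\|_{L^\infty(Q)}\le M_\infty\,\}$ and use the local growth bounds in (A4)--(A5) to see that the Nemytskii operators $y\mapsto \frac{\partial L}{\partial y}(\cdot,\cdot,y)$ and $y\mapsto \frac{\partial L_\Omega}{\partial y}(\cdot,y(\cdot,T))$ are well defined and bounded into $L^{\pd}(0,T;L^{\qd}(\Omega))$ and $L^\infty(\Omega)$. The differentiability of $\mathcal{L}$ with the expected first and second derivatives then follows from a standard Taylor/mean-value argument. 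I expect the genuine obstacle to sit precisely here: proving continuity of the bilinear second derivative $(h_1,h_2)\mapsto\int \frac{\partial^2 L}{\partial y^2}(x,t,y)h_1h_2$ as $y$ varies, which is exactly what the uniform modulus-of-continuity hypotheses on $\frac{\partial^2 L}{\partial y^2}$ and $\frac{\partial^2 L_\Omega}{\partial y^2}$ in (A4)--(A5) are designed to supply.

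Granting that $\mathcal{L}$ is $C^2$, the chain rule gives
\[
F'(u)v=\int_Q\frac{\partial L}{\partial y}(x,t,y_u)\,z_v\,dx\,dt+\nu_\Omega\int_\Omega\frac{\partial L_\Omega}{\partial y}(x,y_u(\cdot,T))\,z_v(\cdot,T)\,dx,
\]
with $z_v=G'(u)v$ solving \eqref{E2.4}. To reach the compact form $F'(u)v=\int_Q\varphi_u v$, I would run the usual adjoint computation: multiply \eqref{E2.4} by $\varphi_u$, integrate over $Q$, and integrate by parts in space and time. The boundary terms on $\Sigma$ vanish since $z_v=\varphi_u=0$ there; the term at $t=0$ vanishes since $z_v(\cdot,0)=0$; the term at $t=T$ reproduces $\nu_\Omega\int_\Omega\frac{\partial L_\Omega}{\partial y}(x,y_u(\cdot,T))z_v(\cdot,T)\,dx$; and the remaining volume integral collapses, via \eqref{E2.13}, to $\int_Q\frac{\partial L}{\partial y}(x,t,y_u)z_v\,dx\,dt$. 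Collecting everything yields exactly the claimed identity for $F'$.

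For the second derivative I would apply the chain rule in its two-term form,
\[
F''(u)(v_1,v_2)=\mathcal{L}''(y_u)\bigl(z_{v_1},z_{v_2}\bigr)+\mathcal{L}'(y_u)\,z_{v_1,v_2},
\]
where $z_{v_i}=G'(u)v_i$ and $z_{v_1,v_2}=G''(u)(v_1,v_2)$ solves the second-order sensitivity equation of Theorem~\ref{T2.2}. The first term is immediate from the formula for $\mathcal{L}''$ and produces the two integrals involving $\frac{\partial^2 L}{\partial y^2}$ and $\frac{\partial^2 L_\Omega}{\partial y^2}$ in the statement. For the second term I would reuse the adjoint identity of the previous paragraph: since $z_{v_1,v_2}$ solves the same linear equation as $z_v$ but with source $-\frac{\partial^2 f}{\partial y^2}(x,t,y_u)z_{v_1}z_{v_2}$ in place of $v$, the identical integration-by-parts computation gives $\mathcal{L}'(y_u)z_{v_1,v_2}=-\int_Q\varphi_u\frac{\partial^2 f}{\partial y^2}(x,t,y_u)z_{v_1}z_{v_2}\,dx\,dt$. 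Adding the two contributions gives precisely the asserted formula for $F''$. Throughout, the integrations by parts are justified by $\varphi_u\in Y$ together with $z_v,z_{v_1,v_2}\in C([0,T],L^2(\Omega))\cap L^2(0,T;H^1_0(\Omega))$ (indeed continuous on $\bar Q$), so the only nontrivial analytical point remains the $C^2$ property of $\mathcal{L}$ discussed above.
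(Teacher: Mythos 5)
Your proposal is correct and takes essentially the same route as the paper, which proves this theorem in one line by invoking the chain rule together with the $C^2$ regularity of $G$ from Theorem \ref{T2.2} -- precisely your decomposition $F=\mathcal{L}\circ G$, with the Nemytskii-operator smoothness of $\mathcal{L}$ supplied by (A4)--(A5) and the adjoint integration by parts converting $\mathcal{L}'(y_u)z_v$ and $\mathcal{L}'(y_u)z_{v_1,v_2}$ into the stated expressions. The only cosmetic slip is restricting to $\{y:\|y\|_{L^\infty(Q)}\le M_\infty\}$, a bound valid only for $u\in\uad$, whereas the theorem concerns arbitrary $u\in L^{\pd}(0,T;L^{\qd}(\Omega))$; since Theorem \ref{T2.1} bounds $\|y_u\|_{L^\infty(Q)}$ in terms of $\|u\|_{L^{\pd}(0,T;L^{\qd}(\Omega))}$ and differentiability is a local property, this is harmless.
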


Assumptions (A1), (A4) and (A5) together with Theorem \ref{T2.1} imply, see \cite[Chapter III]{LSU}, that for every $u\in \uad$, $\varphi_u\in L^2(0,T;H^1_0(\Omega))\cap L^\infty(Q)$ and there exists a constant $K_\infty>0$ independent of $u$ such that
\begin{equation}
\|\varphi_u\|_{L^2(0,T;H^1_0(\Omega))} + \|\varphi_u\|_{L^\infty(Q)}\leq K_\infty\quad \forall u\in \uad.
\label{E2.14}
\end{equation}

\begin{remark}From the expressions for $F'(u)$ and $F''(u)$ established in the previous theorems it is immediate that they can be extended through the same formulas to continuous linear and bilinear forms, respectively, in $L^2(Q)$.
Moreover, assumptions (A2) and (A3), Theorem \ref{T2.1} and inequality \eqref{E2.14} imply the existence of some $M_2>0$ such that
\begin{equation}
\label{E2.15}|F''(u)(v_1,v_2)|\leq M_2\Big(\|z_{v_1}\|_{L^2(Q)}\|z_{v_2}\|_{L^2(Q)}+\nu_\Omega\|z_{v_1}(\cdot, T)\|_{L^2(\Omega)}\|z_{v_2}(\cdot, T)\|_{L^2(\Omega)}\big)
\end{equation}
for all $u\in \uad$ and $v_1,v_2\in L^2(Q)$, where $z_{v_i}=G'(u)v_i$, $i=1,2$.
\end{remark}

Finally, we notice that the directional derivative of $j$ at $u$ in the direction $v$ can be computed as
\begin{equation}\label{E2.16}
  j'( u;v) = \int_{u>0} v + \int_{u=0} |v| - \int_{u < 0} v.
\end{equation}
In what follows, we will write $J'(u;v)=F'(u)v+\mu j'(u;v)$. We will also denote $\partial j(u)$ as the subdifferential of $j$ at $u$ in the sense of convex analysis.


Existence of a global solution of \Pb follows in a standard way using Theorem 2.1; see e.g. \cite{Casas-Mateos2017}. Since \Pb is not a convex problem, we consider local solutions as well. Let us state precisely the different concepts of local solution.

\begin{definition}\label{De.2.7}
We say that $\bar u\in \uad$ is an $L^r(Q)$-weak local minimum of \Pb, with $r\in [1,+\infty]$, if there exists some $\varepsilon >0$ such that
\[
J(\bar u)\leq J(u)\quad\forall u\in \uad\mbox{ with }\|\bar u-u\|_{L^r(Q)}\leq \varepsilon.
\]
An element $\bar u\in\uad$ is said to be a strong local minimum of \Pb if there exists some $\varepsilon>0$ such that
\[
J(\bar u)\leq J(u)\quad \forall u\in \uad\mbox{ with }\|y_{\bar u}-y_u\|_{L^\infty(Q)}  \leq \varepsilon.
\]
We say that $\bar u\in \uad$ is a strict (weak or strong) local minimum if the above inequalities are strict for $u\neq \bar
u$.
\end{definition}

As far as we know, the notion of strong local solutions in the framework of control theory was introduced in \cite{BBS2014} for the first time; see also \cite{BS2016}.

\begin{lemma}
The following properties  hold:
\begin{enumerate}
\item $\bar u$ is an $L^1(Q)$-weak local minimum of \Pb if and only if it is an $L^r(Q)$-weak local minimum of \Pb for every $r\in (1,+\infty)$.
\item If $\bar u$ is an $L^r(Q)$-weak local minimum of \Pb for some $r < +\infty$, then it is an $L^\infty(Q)$-weak local minimum of \Pb.
\item If $\bar u$ is a strong local minimum of \Pb, then it is a $L^r(Q)$-weak local minimum of \Pb for all $r \in [1,\infty]$.
\end{enumerate}
\label{L2.1}
\end{lemma}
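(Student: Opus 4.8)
The plan is to prove the three assertions in the order (2), (1), (3), so that each can reuse the previous ones, exploiting throughout that $Q$ has finite measure and that admissible controls are uniformly bounded: for any $u,\bar u\in\uad$ we have $\|u-\bar u\|_{L^\infty(Q)}\le\beta-\alpha$. All the comparisons between weak-minimality notions for finite exponents then reduce to elementary inequalities between Lebesgue norms.

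For (2), if $\bar u$ is an $L^r(Q)$-weak local minimum with radius $\varepsilon$ for some $r<\infty$, the inclusion $L^\infty(Q)\hookrightarrow L^r(Q)$, namely $\|u-\bar u\|_{L^r(Q)}\le|Q|^{1/r}\|u-\bar u\|_{L^\infty(Q)}$, shows that the choice of $L^\infty$-radius $\varepsilon\,|Q|^{-1/r}$ makes $\bar u$ an $L^\infty(Q)$-weak local minimum. For (1), the implication $L^1\Rightarrow L^r$ follows from Hölder's inequality $\|u-\bar u\|_{L^1(Q)}\le|Q|^{1-1/r}\|u-\bar u\|_{L^r(Q)}$ by shrinking the radius accordingly; the converse $L^r\Rightarrow L^1$ (for any single fixed $r\in(1,\infty)$) uses the uniform bound to interpolate, since $\|u-\bar u\|_{L^r(Q)}^r=\int_Q|u-\bar u|^{\,r-1}|u-\bar u|\,dx\,dt\le(\beta-\alpha)^{r-1}\|u-\bar u\|_{L^1(Q)}$, so that an $L^1$-radius $\varepsilon^r(\beta-\alpha)^{-(r-1)}$ forces $\|u-\bar u\|_{L^r(Q)}\le\varepsilon$. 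Because the converse needs only one exponent, combining the two inclusions yields exactly the equivalence claimed in (1).

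For (3), by (1) and (2) it suffices to prove that a strong local minimum $\bar u$ is an $L^1(Q)$-weak local minimum: then (1) upgrades this to every $r\in(1,\infty)$, and (2), applied with $r=1$, to $r=\infty$, covering all $r\in[1,\infty]$. Let $\varepsilon$ be the strong-minimality radius, measured in $\|y_{\bar u}-y_u\|_{L^\infty(Q)}$. I would chain the Lipschitz estimate of Theorem \ref{T2.1}, $\|y_u-y_{\bar u}\|_{L^\infty(Q)}\le C_{\pd,\qd}\|u-\bar u\|_{L^{\pd}(0,T;L^{\qd}(\Omega))}$, with a comparison of the mixed norm and the $L^1(Q)$ norm valid on $\uad$. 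Writing $w=u-\bar u$ and $K=\beta-\alpha$, the pointwise bound $|w|^{\qd}\le K^{\qd-1}|w|$ gives $\|w(\cdot,t)\|_{L^{\qd}(\Omega)}^{\pd}\le K^{\pd(\qd-1)/\qd}\|w(\cdot,t)\|_{L^1(\Omega)}^{\pd/\qd}$; integrating in $t$, using $\|w(\cdot,t)\|_{L^1(\Omega)}\le K|\Omega|$ when $\pd\ge\qd$ and Hölder in time when $\pd<\qd$, yields $\|w\|_{L^{\pd}(0,T;L^{\qd}(\Omega))}\le C\|w\|_{L^1(Q)}^{\theta}$ with $\theta=1/\max(\pd,\qd)>0$. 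Hence an $L^1$-radius $\delta$ small enough that $C_{\pd,\qd}\,C\,\delta^{\theta}\le\varepsilon$ guarantees $\|y_u-y_{\bar u}\|_{L^\infty(Q)}\le\varepsilon$ whenever $\|u-\bar u\|_{L^1(Q)}\le\delta$, and therefore $J(\bar u)\le J(u)$, so $\bar u$ is $L^1(Q)$-weak.

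The only genuinely non-routine point is this last comparison between the anisotropic norm of $L^{\pd}(0,T;L^{\qd}(\Omega))$ that appears in the state Lipschitz estimate and the isotropic $L^1(Q)$ norm in which weak minimality is phrased; it is precisely here that the uniform bound $\beta-\alpha$ on admissible controls is indispensable, and it requires the short two-case Hölder computation above. Everything else is a standard finite-measure embedding, so once this estimate is in place the whole lemma follows from the architecture described.
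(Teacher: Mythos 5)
Your proposal is correct and takes essentially the same route as the paper: statements (1) and (2) are exactly the paper's finite-measure and uniform-boundedness norm comparisons on $\uad$, and statement (3) is proved, as in the paper, by chaining the Lipschitz estimate $\|y_u-y_{\bar u}\|_{L^\infty(Q)}\le C_{\pd,\qd}\|u-\bar u\|_{L^{\pd}(0,T;L^{\qd}(\Omega))}$ of Theorem \ref{T2.1} with a comparison of norms on the bounded set $\uad$. The only cosmetic difference is that the paper first passes to an $L^r(Q)$-weak minimum for $r\ge\max\{\pd,\qd\}$ via $\|u-\bar u\|_{L^{\pd}(0,T;L^{\qd}(\Omega))}\le C_r\|u-\bar u\|_{L^r(Q)}$ and then invokes statement 1, whereas your two-case H\"older computation bounds the mixed norm directly by $C\|u-\bar u\|_{L^1(Q)}^{1/\max\{\pd,\qd\}}$, which is just the composition of those same two steps.
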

\begin{proof}Statement 1 is a consequence of the equivalence of all the $L^r(Q)$ topologies $(1\leq r<+\infty)$ in $\uad$.
Since $\|u\|_{L^r(Q)}\leq T^{1/r}|\Omega|^{1/r}\|u\|_{L^\infty(Q)}$, statement 2 follows. To prove statement 3 we use the second estimate in Theorem \ref{T2.1}:
\[\|y_u-\bar y\|_{L^\infty(Q)}\leq C_{\pd,\qd} \|u-\bar u\|_{L^{\pd}(0,T;L^{\qd}(\Omega))}\leq C_r \|u-\bar u\|_{L^r(Q)}
\]
 for all $r \geq \max\{\pd,\qd\}.$ Then statement 3 follows from statement 1 and the above inequality.
\end{proof}

Next we state first order optimality conditions.
\begin{theorem}\label{T2.9}
  Suppose $\bar u$ is a local solution of \Pb in any of the senses given in Definition \ref{De.2.7}. Then
  \begin{equation}\label{E2.17}
J'(\bar u; u-\bar u)\geq 0\ \forall u\in \uad
  \end{equation}
  holds.
Moreover,
   there exist $\bar y$ and $\bar\varphi$ in $Y$ and $\bar \lambda\in\partial j(\bar u)$ such that
\begin{subequations}
  \begin{equation}\label{E2.18a}
\left\{\begin{array}{rcll}
\displaystyle\frac{\partial \bar y}{\partial t} + A \bar y + f(x,t,\bar y) &=& \bar u&\mbox{ in }Q,\\
 \bar y& =& 0&\mbox{ on }\Sigma,\\
 \bar y(\cdot, 0) & = & y_0&\mbox{ in }\Omega,
  \end{array}
  \right.
\end{equation}
\begin{equation}\label{E2.18b}
\left\{\begin{array}{rcll}
-\displaystyle\frac{\partial\bar\varphi}{\partial t} + A^* \bar\varphi + \displaystyle\frac{\partial f}{\partial y}(x,t,\bar y)\bar\varphi &=& \displaystyle\frac{\partial L}{\partial y}(x,t,\bar y)& \mbox{ in }Q,\\
 \bar\varphi& =& 0&\mbox{ on }\Sigma,\\
 \bar\varphi(\cdot,T) & = & \nu_\Omega\displaystyle\frac{\partial L_\Omega}{\partial y}(x,\bar y(x,T))&\mbox{ in }\Omega,
  \end{array}
  \right.
\end{equation}
\begin{equation}\label{E2.18c}
\int_Q (\bar\varphi+\mu\bar\lambda) (u-\bar u)dx\,dt\geq 0\ \forall u\in\uad.
\end{equation}
  \end{subequations}

\end{theorem}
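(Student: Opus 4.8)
The plan is to split the proof into three parts: the variational inequality \eqref{E2.17}, the identification of $\bar y$ and $\bar\varphi$, and the extraction of the multiplier $\bar\lambda$ yielding \eqref{E2.18c}.

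First I would establish \eqref{E2.17}. Since $\uad$ is convex, for any $u\in\uad$ the segment $\bar u+\rho(u-\bar u)$ belongs to $\uad$ for $\rho\in[0,1]$. Whatever the sense of local minimum in Definition \ref{De.2.7}, the continuity of the control-to-state map (the second estimate of Theorem \ref{T2.1}, together with $\|\rho(u-\bar u)\|_{L^r(Q)}=\rho\|u-\bar u\|_{L^r(Q)}\to 0$) guarantees that $\bar u+\rho(u-\bar u)$ lies in the admissible neighbourhood of $\bar u$ once $\rho$ is small enough; hence $J(\bar u)\le J(\bar u+\rho(u-\bar u))$. Because $F$ is of class $C^2$ and $j$ is convex with one-sided directional derivatives given by \eqref{E2.16}, dividing by $\rho$ and letting $\rho\downarrow 0$ yields $J'(\bar u;u-\bar u)=F'(\bar u)(u-\bar u)+\mu j'(\bar u;u-\bar u)\ge 0$, which is \eqref{E2.17}.

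Next, $\bar y:=y_{\bar u}$ solves \eqref{E2.18a} by the very definition of the state, and $\bar\varphi:=\varphi_{\bar u}$, the adjoint state of Theorem \ref{T2.5} associated with $\bar u$ via \eqref{E2.13}, solves \eqref{E2.18b}; both belong to $Y$ by Theorem \ref{T2.1} and \eqref{E2.14}. With this notation $F'(\bar u)v=\int_Q\bar\varphi\,v\,dx\,dt$. To reach \eqref{E2.18c} I would first argue that $\bar u$ is a \emph{global} minimizer over $\uad$ of the convex functional $g(u):=\int_Q\bar\varphi\,u\,dx\,dt+\mu j(u)$. Indeed $g$ is convex, and its directional derivative at $\bar u$ is $g'(\bar u;u-\bar u)=\int_Q\bar\varphi(u-\bar u)\,dx\,dt+\mu j'(\bar u;u-\bar u)=J'(\bar u;u-\bar u)\ge 0$ by \eqref{E2.17}. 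For a convex functional the inequality $g(u)\ge g(\bar u)+g'(\bar u;u-\bar u)$ then gives $g(u)\ge g(\bar u)$ for every $u\in\uad$, i.e. global optimality.

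Finally — and this is the only genuinely nonsmooth step — I would extract $\bar\lambda$ from the optimality of $\bar u$ for $g$. Writing this as $0\in\partial\big(g+I_{\uad}\big)(\bar u)$ with $I_{\uad}$ the indicator of $\uad$, the sum rule of convex analysis (applicable since $j$ is finite and continuous on $L^1(Q)$, so the qualification condition holds) yields $0\in\bar\varphi+\mu\,\partial j(\bar u)+N_{\uad}(\bar u)$. Hence there exists $\bar\lambda\in\partial j(\bar u)$ with $-(\bar\varphi+\mu\bar\lambda)\in N_{\uad}(\bar u)$, i.e. $\int_Q(\bar\varphi+\mu\bar\lambda)(u-\bar u)\,dx\,dt\ge 0$ for all $u\in\uad$, which is exactly \eqref{E2.18c}. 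A more self-contained alternative is to construct $\bar\lambda$ pointwise: the target inequality is equivalent to the a.e. sign conditions $\bar\varphi+\mu\bar\lambda\ge 0$ where $\bar u=\alpha$, $\bar\varphi+\mu\bar\lambda\le 0$ where $\bar u=\beta$, and $\bar\varphi+\mu\bar\lambda=0$ where $\alpha<\bar u<\beta$, so one sets $\bar\lambda=\mathrm{sign}(\bar u)$ where $\bar u\neq 0$ and chooses $\bar\lambda\in[-1,1]$ on $\{\bar u=0\}$ to enforce those signs. The main obstacle, in either route, is the bookkeeping that certifies the constructed $\bar\lambda$ actually lies in $\partial j(\bar u)=\{\lambda\in L^\infty(Q):\|\lambda\|_{L^\infty(Q)}\le 1,\ \int_Q\lambda\,\bar u\,dx\,dt=\|\bar u\|_{L^1(Q)}\}$ while being measurable and bounded; the smooth parts above are routine.
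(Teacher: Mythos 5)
Your proposal is correct and takes essentially the same route as the paper's proof: the directional-derivative argument for \eqref{E2.17}, the reduction to global minimization over $\uad$ of the convex functional $u\mapsto\int_Q\bar\varphi\,u\,dx\,dt+\mu j(u)$ (the paper obtains this directly by bounding the $j$-difference quotient via convexity before passing to the limit, rather than via $g(u)\ge g(\bar u)+g'(\bar u;u-\bar u)$, but the two are equivalent), and the subdifferential sum rule of \cite{Ekeland-Temam} to extract $\bar\lambda\in\partial j(\bar u)$ together with the normal-cone inequality \eqref{E2.18c}. The pointwise construction of $\bar\lambda$ you sketch as an alternative does not appear in the paper but is a standard equivalent.
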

\begin{proof}
  To prove \eqref{E2.17} it is enough to use the local optimality of $\bar u$ and the convexity of $\uad$ as follows:
  \[0\leq \lim_{\rho\searrow 0} \frac{J(\bar u+\rho(u-\bar u))}{\rho} = J'(\bar u;u-\bar u)\quad \forall u\in\uad.\]
From the expression of $F'$ established in Theorem \ref{T2.5} and the convexity of $j$ we infer
\begin{align*}
0\leq & \lim_{\rho\searrow 0 } \frac{J(\bar u+\rho(u-\bar u))}{\rho}\\
 \leq  & \lim_{\rho\searrow 0}
\frac{F(\bar u+\rho(u-\bar u))}{\rho}+\mu j(u)-\mu j(\bar u) \\
 = & \int_Q \bar\varphi (u-\bar u)dx\,dt + \mu j(u)-\mu j(\bar u)\ \forall u\in\uad.
 \end{align*}
Hence, $\bar u$ solves the problem
\[\min_{u\in L^\infty(Q)} I(u) := \int_Q \bar\varphi u dx\,dt+\mu j(u)+ I_{\uad} (u),\]
where $I_{\uad}$ is the indicator function of the convex set $\uad$. Therefore, using the subdifferential calculus, see e.g. \cite[Chapter I, Proposition 5.6]{Ekeland-Temam}, we obtain $0\in\partial I(\bar u) = \bar\varphi + \mu \partial j(\bar u) + \partial I_{\uad}(\bar u)$, which implies \eqref{E2.18c} for some $\bar\lambda\in \partial j(\bar u)$.
\end{proof}

   From \eqref{E2.18c} we deduce the following corollary; see \cite{CHW2016}.
\begin{corollary}
Under the assumptions of Theorem \ref{T2.9},
\begin{align*}
  \mbox{ if }\bar\varphi(x,t) > +\mu & \mbox{ then }\bar u(x,t) = \alpha, \\
  \mbox{ if }\bar\varphi(x,t) < -\mu & \mbox{ then }\bar u(x,t) = \beta.
  \end{align*}
If $\mu > 0$, then
\begin{align*}
   &\mbox{ if }|\bar\varphi(x,t)| < \mu  \mbox{ then }\bar u(x,t) = 0, \\
  &\bar\lambda(x,t)  =  \proj_{[-1,+1]}\left(-\frac{1}{\mu}\bar\varphi(x,t)\right)
  \end{align*}
and $\bar\lambda\in Y$.
\label{C2.10}\end{corollary}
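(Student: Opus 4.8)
The plan is to start from the variational inequality \eqref{E2.18c} together with the membership $\bar\lambda\in\partial j(\bar u)$ and to reduce everything to a pointwise statement. First I would localize \eqref{E2.18c}: since $\uad$ is defined by the pointwise box constraint $\alpha\le u\le\beta$ and admits independent variations on arbitrary measurable subsets of $Q$, the integral inequality is equivalent to
\[
(\bar\varphi(x,t)+\mu\bar\lambda(x,t))\,(s-\bar u(x,t))\ge 0\quad\text{for all } s\in[\alpha,\beta],\ \text{a.a. }(x,t)\in Q.
\]
The standard way to obtain this is by contradiction: were it to fail on a set of positive measure, one could pick $s$ from a countable dense subset of $[\alpha,\beta]$, switch $\bar u$ to $s$ on that set while keeping it unchanged elsewhere, and thereby violate \eqref{E2.18c}. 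Writing $g:=\bar\varphi+\mu\bar\lambda$, this pointwise inequality says exactly that $\bar u(x,t)$ minimizes $s\mapsto g(x,t)s$ over $[\alpha,\beta]$, whence $\bar u(x,t)=\alpha$ wherever $g(x,t)>0$ and $\bar u(x,t)=\beta$ wherever $g(x,t)<0$.

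Next I would read off the three implications using $|\bar\lambda|\le 1$ a.e., which holds because $\bar\lambda\in\partial j(\bar u)$ and $j=\|\cdot\|_{L^1(Q)}$. If $\bar\varphi(x,t)>\mu$ then $g\ge\bar\varphi-\mu>0$, so $\bar u=\alpha$; if $\bar\varphi(x,t)<-\mu$ then $g\le\bar\varphi+\mu<0$, so $\bar u=\beta$. For the case $\mu>0$ and $|\bar\varphi(x,t)|<\mu$ I argue by contradiction: if $\bar u>0$ then $\bar\lambda=1$ (subdifferential of the absolute value), so $g=\bar\varphi+\mu>0$ and hence $\bar u=\alpha$, which contradicts $\bar u>0$ since $\alpha<0$; the case $\bar u<0$ is symmetric. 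Therefore $\bar u=0$. This is the point where the standing sparse-control assumption $\alpha<0<\beta$, making $0$ an admissible interior value, is used.

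Finally, for the projection formula I would combine the pointwise variational condition with $\bar\lambda\in\partial j(\bar u)$, distinguishing cases according to the position of $\bar\varphi(x,t)$ relative to $\pm\mu$. On $\{|\bar\varphi|<\mu\}$ we have $\bar u=0$, so neither $g>0$ nor $g<0$ can hold (they would force $\bar u=\alpha<0$ or $\bar u=\beta>0$); hence $g=0$, i.e. $\bar\lambda=-\bar\varphi/\mu\in(-1,1)=\proj_{[-1,+1]}(-\bar\varphi/\mu)$. On $\{\bar\varphi>\mu\}$ we have $\bar u=\alpha<0$, so $\bar\lambda=-1$, which equals $\proj_{[-1,+1]}(-\bar\varphi/\mu)$ because $-\bar\varphi/\mu<-1$; symmetrically $\bar\lambda=1$ on $\{\bar\varphi<-\mu\}$, and the threshold sets $\{\bar\varphi=\pm\mu\}$ are forced likewise. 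This yields $\bar\lambda=\proj_{[-1,+1]}(-\tfrac1\mu\bar\varphi)$ a.e. The regularity $\bar\lambda\in Y$ then follows from $\bar\varphi\in Y$ (Theorem \ref{T2.5} and \eqref{E2.14}): the map $\proj_{[-1,+1]}$ is globally Lipschitz and fixes $0$, so $\bar\lambda\in L^\infty(Q)$ with $\|\bar\lambda\|_{L^\infty(Q)}\le 1$ and, by the chain rule for the superposition of a Lipschitz function with an $H^1_0$ function, $\bar\lambda(\cdot,t)\in H^1_0(\Omega)$ for a.a. $t$ with $\bar\lambda\in L^2(0,T;H^1_0(\Omega))$.

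The sign-by-sign case distinctions are routine. The two steps deserving care are the localization of \eqref{E2.18c} to the pointwise inequality, which is a measurable-selection argument where one must check that the competitor controls are admissible and measurable, and the verification that the truncation preserves the zero boundary trace so that $\bar\lambda$ lands in $H^1_0(\Omega)$ rather than merely in $H^1(\Omega)$. I expect the latter to be the main obstacle; it rests on the fact that a Lipschitz superposition vanishing at $0$ maps $H^1_0(\Omega)$ into itself, which one sees by approximating $\bar\varphi(\cdot,t)$ by $C_c^\infty(\Omega)$ functions, whose Lipschitz images remain compactly supported.
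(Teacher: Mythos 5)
Your proof is correct and follows essentially the same route as the intended one: the paper gives no proof of Corollary \ref{C2.10}, deferring to \cite{CHW2016}, and the argument there is precisely your scheme of localizing \eqref{E2.18c} to the pointwise inequality $(\bar\varphi+\mu\bar\lambda)(s-\bar u)\ge 0$ for all $s\in[\alpha,\beta]$, the sign-by-sign case analysis using $|\bar\lambda|\le 1$ (including the threshold sets $\{\bar\varphi=\pm\mu\}$, which you handle correctly), and the Lipschitz-superposition argument giving $\bar\lambda\in Y$. You are also right to flag that the case $\mu>0$ implicitly requires $\alpha<0<\beta$ --- an assumption standard in the cited sparse-control references though not stated explicitly in this paper --- and your proof correctly makes its role explicit.
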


Let us write the second order necessary conditions. Given a control $\bar u\in \uad$ satisfying \eqref{E2.17}, we say that a function $v\in L^2(Q)$ satisfies the sign condition if
\begin{equation}\label{E2.19}
  v(x,t)\left\{\begin{array}{cl}
                    \geq 0 & \mbox{ if }\bar u(x,t)=\alpha, \\
                    \leq 0 & \mbox{ if }\bar u(x,t)=\beta.
                  \end{array}\right.
\end{equation}
Following \cite{CHW2012a,CHW2016}, we introduce the cone
\[C_{\bar u} = \{v\in L^2(Q)\mbox{ satisfying }\eqref{E2.19}\mbox{ and } J'(\bar u;v)=0\}.\]
We have the following proposition; see \cite[Lemma 3.5]{CHW2012a}.
\begin{proposition}
If $\bar u \in \uad$ satisfies \eqref{E2.17}, then
\begin{equation}\label{E2.20}
J'(\bar u;v)\geq 0\mbox{ for all }v \in L^2(Q) \mbox{ satisfying the sign condition \eqref{E2.19}}.
\end{equation}
As a consequence, it follows that $C_{\bar u}$ is a closed convex cone.
\label{P2.1}
\end{proposition}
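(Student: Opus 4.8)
The plan is to turn the sublinear form $J'(\bar u;\cdot)$ into a single integral against the ``gradient'' $\bar\varphi+\mu\bar\lambda$ of the first order condition and then to read off its sign pointwise. First note that although the statement only assumes \eqref{E2.17}, the convexity argument used in the proof of Theorem \ref{T2.9} applies verbatim starting from \eqref{E2.17} (using $j'(\bar u;u-\bar u)\le j(u)-j(\bar u)$, which holds since $j$ is convex) and produces a multiplier $\bar\lambda\in\partial j(\bar u)$ satisfying \eqref{E2.18c}. With this in hand I would split, by Theorem \ref{T2.5} and \eqref{E2.16},
\[
J'(\bar u;v)=\int_Q\bar\varphi\,v\,dx\,dt+\mu\, j'(\bar u;v),
\]
and use that $\bar\lambda\in\partial j(\bar u)$ gives the lower bound $j'(\bar u;v)\ge\int_Q\bar\lambda v\,dx\,dt$ (the difference quotients $t^{-1}(j(\bar u+tv)-j(\bar u))$ are $\ge\int_Q\bar\lambda v$ for every $t>0$, and $j'(\bar u;v)$ is their infimum). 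Hence
\[
J'(\bar u;v)\ge\int_Q(\bar\varphi+\mu\bar\lambda)\,v\,dx\,dt.
\]

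Next I would extract the pointwise consequences of \eqref{E2.18c}. Writing $d=\bar\varphi+\mu\bar\lambda$, the inequality $\int_Q d\,(u-\bar u)\,dx\,dt\ge0$ for all $u\in\uad$ means, by the usual localization, that for a.e.\ $(x,t)$ the number $\bar u(x,t)$ minimizes $s\mapsto d(x,t)\,s$ on $[\alpha,\beta]$; equivalently $d\ge0$ where $\bar u=\alpha$, $d\le0$ where $\bar u=\beta$, and $d=0$ where $\alpha<\bar u<\beta$. Comparing these signs with the sign condition \eqref{E2.19} imposed on $v$ gives $d\,v\ge0$ a.e.\ in $Q$: both factors are nonnegative on $\{\bar u=\alpha\}$, both are nonpositive on $\{\bar u=\beta\}$, and $d$ vanishes on $\{\alpha<\bar u<\beta\}$. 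Integrating yields $\int_Q d\,v\,dx\,dt\ge0$, which combined with the previous bound proves \eqref{E2.20}.

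For the final assertion I would observe that the set $K$ of functions satisfying \eqref{E2.19} is a closed convex cone, since \eqref{E2.19} consists of pointwise sign constraints stable under passing to a.e.-convergent subsequences. The functional $v\mapsto J'(\bar u;v)$ is continuous, convex and positively homogeneous on $L^2(Q)$: the term $\int_Q\bar\varphi v$ is a bounded linear form, and $j'(\bar u;\cdot)$ is sublinear and continuous because each of the three terms in \eqref{E2.16} depends continuously on $v\in L^2(Q)$. By \eqref{E2.20}, on $K$ the condition $J'(\bar u;v)=0$ is the same as $J'(\bar u;v)\le0$, so $C_{\bar u}=\{v\in K:J'(\bar u;v)\le0\}$ is the intersection of the closed convex cone $K$ with a closed convex sublevel set of a positively homogeneous functional, hence itself a closed convex cone.

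I expect the only genuinely delicate point to be the sign bookkeeping in the middle paragraph: one must be sure that the lower bound $j'(\bar u;v)\ge\int_Q\bar\lambda v$ (rather than an equality) is precisely what is needed, and that the three pointwise cases coming from \eqref{E2.18c} line up with the three cases of \eqref{E2.19} so that the integrand $d\,v$ has a definite sign everywhere. The remaining ingredients are routine regularity and convexity facts.
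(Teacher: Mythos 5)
Your proof is correct, and since the paper gives no proof of Proposition \ref{P2.1} but simply refers to \cite[Lemma 3.5]{CHW2012a}, your argument is essentially the standard one used there: the subgradient inequality $j'(\bar u;v)\ge\int_Q\bar\lambda v\,dx\,dt$ (which in \cite{CHW2012a} appears pointwise as $|v|\ge\bar\lambda v$ on $\{\bar u=0\}$, using $|\bar\lambda|\le 1$) combined with the pointwise sign information on $d=\bar\varphi+\mu\bar\lambda$ obtained by localizing \eqref{E2.18c}, so that $d\,v\ge0$ a.e.\ for every $v$ satisfying \eqref{E2.19}. Your preliminary observation is also the right one to make explicit: the multiplier $\bar\lambda\in\partial j(\bar u)$ exists under the hypothesis \eqref{E2.17} alone, because the proof of Theorem \ref{T2.9} uses local optimality only to establish \eqref{E2.17} and derives \eqref{E2.18a}--\eqref{E2.18c} from it, and your closedness argument for $C_{\bar u}$ (intersection of the closed convex cone defined by \eqref{E2.19} with the zero sublevel set of the continuous sublinear functional $v\mapsto J'(\bar u;v)$, which by \eqref{E2.20} coincides on that cone with the set where $J'(\bar u;v)=0$) is the standard and complete one.
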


If $\mu = 0$, we deduce from Corollary \ref{C2.10} that $\bar\varphi(x,t) v(x,t) = | \bar\varphi(x,t) v(x,t)|$ for every $v\in L^2(Q)$ satisfying  the sign condition  \eqref{E2.19}. Consequently the following identity holds.
\begin{equation}
C_{\bar u} =
\{v\in L^2(Q)\mbox{ satisfying }\eqref{E2.19}\mbox{ and } v(x,t)=0\mbox{ if }|\bar\varphi(x,t)|>0\}.
\label{E2.21}
\end{equation}
For $\mu > 0$, from Corollary \ref{C2.10} we also infer that
\begin{equation}
\begin{array}{rl}
C_{\bar u} =& \Bigg\{v\in L^2(Q)\mbox{ satisfying }\eqref{E2.19}\mbox{ and}\\
            &\hspace{1.2cm} \ v(x,t) \left\{\begin{array}{cl}
                    \geq 0 & \mbox{ if }\bar\varphi(x,t) = -\mu\mbox{ and }\bar u(x,t)=0  \\
                    \leq 0 & \mbox{ if }\bar\varphi(x,t) = +\mu\mbox{ and }\bar u(x,t)=0  \\
                    =0 & \mbox{ if }\Big| |\bar\varphi(x,t)| -\mu \Big | >0
                  \end{array}\right. \Bigg\};
\end{array}
\label{E2.22}
\end{equation}
see \cite{CRT2014} for a proof.

The second order necessary conditions are established in \cite[Theorem 3.7]{CHW2012a}. Although that result is stated for elliptic problems and a Tikhonov regularization term, the proof can be translated  to our setting with the straightforward changes.
\begin{theorem}\label{T2.11}
  Suppose $\bar u$ is a local solution of \Pb in any of the senses given in Definition \ref{De.2.7}. Then,
$F''(\bar u)v^2\geq 0\mbox{ for all }v\in C_{\bar u}$ holds.
\end{theorem}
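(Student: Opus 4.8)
The plan is to prove the inequality first on a dense family of bounded, feasible directions and then pass to the limit. Fix $v\in C_{\bar u}$. The obstruction is that $\bar u+\rho v$ need not lie in $\uad$ for any $\rho>0$, because $v$ may be unbounded and may push $\bar u$ across a bound at points arbitrarily close to the active sets. To remedy this I would introduce the truncations
\[
v_k = c_k\, v, \qquad c_k(x,t) = \begin{cases} \min\{1,\, k/|v(x,t)|\} & \text{on } A_k,\\ 0 & \text{on } Q\setminus A_k,\end{cases}
\]
where $A_k$ consists of the points with $\alpha+\tfrac1k\le\bar u\le\beta-\tfrac1k$, together with those where $\bar u\in[\alpha,\alpha+\tfrac1k)$ and $v\ge0$, and those where $\bar u\in(\beta-\tfrac1k,\beta]$ and $v\le0$. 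Since $0\le c_k\le1$, the function $v_k$ satisfies \eqref{E2.19} and $|v_k|\le|v|$. Moreover $v_k\to v$ a.e.: on $\{\alpha<\bar u<\beta\}$ and on $\{\bar u=\alpha,\ v\ge0\}\cup\{\bar u=\beta,\ v\le0\}$ one has $c_k\to1$, while the complementary set $\{\bar u=\alpha,\ v<0\}\cup\{\bar u=\beta,\ v>0\}$ is null by \eqref{E2.19}; dominated convergence then gives $v_k\to v$ in $L^2(Q)$.

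For fixed (large) $k$ the arc $u_\rho:=\bar u+\rho v_k$ lies in $\uad$ for every $\rho\in[0,\rho_k]$ with $\rho_k>0$ small: on $\{\alpha+\tfrac1k\le\bar u\le\beta-\tfrac1k\}$ this holds because $|v_k|\le k$ and $\rho_k k\le\tfrac1k$, while on the two boundary layers $v_k$ has exactly the sign that pushes $\bar u$ away from the near bound and keeps it short of the far one for $\rho$ small. I would next record that $v_k$ stays in the critical cone. Writing $J'(\bar u;\,\cdot\,)=\int_Q g[\,\cdot\,]$, where $g[w]$ is the pointwise density coming from $F'(\bar u)w=\int_Q\bar\varphi w$ and \eqref{E2.16}, Corollary \ref{C2.10} shows that $g[w]\ge0$ a.e.\ whenever $w$ obeys \eqref{E2.19}; this is precisely the pointwise content of \eqref{E2.20}. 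Since $v\in C_{\bar u}$ gives $\int_Q g[v]=J'(\bar u;v)=0$ with a nonnegative integrand, we get $g[v]=0$ a.e. As $g[\,\cdot\,]$ is positively homogeneous of degree one and $v_k=c_k v$ with $c_k\ge0$, it follows that $g[v_k]=c_k\,g[v]=0$ a.e., hence $J'(\bar u;v_k)=0$ and $v_k\in C_{\bar u}$.

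With these ingredients I would close by a one-variable Taylor expansion. Because $v_k$ is bounded, the sign of $\bar u+\rho v_k$ agrees with that of $\bar u$ off $\{\bar u=0\}$ for $\rho$ small, so $j(\bar u+\rho v_k)=j(\bar u)+\rho\,j'(\bar u;v_k)$ exactly; combined with the $C^2$ expansion of $F$ from Theorem \ref{T2.5} this yields, for some $\theta\in(0,1)$,
\[
0 \le J(u_\rho)-J(\bar u) = \rho\, J'(\bar u;v_k) + \tfrac{\rho^2}{2}\, F''(\bar u+\theta\rho v_k)\,v_k^2 = \tfrac{\rho^2}{2}\, F''(\bar u+\theta\rho v_k)\,v_k^2,
\]
the left inequality being local optimality in any of the senses of Definition \ref{De.2.7} (valid since $\|u_\rho-\bar u\|_{L^\infty(Q)}\to0$, hence $\|y_{u_\rho}-\bar y\|_{L^\infty(Q)}\to0$ by Theorem \ref{T2.1}). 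Dividing by $\rho^2/2$ and letting $\rho\downarrow0$, continuity of $F''$ gives $F''(\bar u)v_k^2\ge0$; finally, letting $k\to\infty$ and using that $F''(\bar u)$ extends to a continuous bilinear form on $L^2(Q)$ bounded as in \eqref{E2.15}, together with $v_k\to v$ in $L^2(Q)$, yields $F''(\bar u)v^2\ge0$.

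The delicate step, and the main obstacle, is the construction of $v_k$: it must simultaneously admit a genuine feasible arc $u_\rho\in\uad$, remain in the critical cone $C_{\bar u}$, and converge to $v$ in $L^2(Q)$. Reconciling feasibility near the active sets with membership in $C_{\bar u}$ is what forces the precise definition of the cut-off $c_k$ and relies on the pointwise nonnegativity of the density $g$ encoded in Corollary \ref{C2.10}.
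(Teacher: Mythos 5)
Your overall strategy (truncate $v$ to a bounded direction $v_k=c_kv$ admitting a feasible arc, show $v_k\in C_{\bar u}$ via pointwise nonnegativity and positive homogeneity of the density of $J'(\bar u;\cdot)$, expand $J$ along the arc, then pass to the limit using the $L^2$-extension \eqref{E2.15} of $F''(\bar u)$) is exactly the strategy of the proof the paper defers to, namely \cite[Theorem 3.7]{CHW2012a}; the feasibility argument, the identification $g[v]=0$ a.e.\ from $J'(\bar u;v)=0$ with $g[v]\ge 0$, the homogeneity step $g[c_kv]=c_kg[v]$, and the final limits $\rho\downarrow 0$, $k\to\infty$ are all sound. (One small point: the a.e.\ nonnegativity of the density $g$ for $\mu>0$ needs not only the sign implications in Corollary \ref{C2.10} but also the projection formula $\bar\lambda=\proj_{[-1,+1]}(-\bar\varphi/\mu)$ together with $\bar\lambda\in\partial j(\bar u)$, e.g.\ to exclude a positive-measure set where $\bar\varphi=\mu$ and $\bar u>0$; since the formula is part of Corollary \ref{C2.10}, this is derivable, but it deserves a line.)

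There is, however, a genuine gap in the case $\mu>0$: the claimed exact identity $j(\bar u+\rho v_k)=j(\bar u)+\rho\,j'(\bar u;v_k)$ for small $\rho$ is false with your $A_k$, because $A_k$ excises neighborhoods of the active sets $\{\bar u=\alpha\}$, $\{\bar u=\beta\}$ but not of the set $\{0<|\bar u|<1/k\}$. On that set $|v_k|$ may be as large as $k$, and for \emph{every} $\rho>0$ the sign of $\bar u+\rho v_k$ can flip on $\{0<\bar u<\rho k\}$ (and symmetrically for $\bar u<0$); note that $v\in C_{\bar u}$ does not force $v$ to vanish there, since by \eqref{E2.22} $v$ is unconstrained on $\{|\bar\varphi|=\mu,\ \bar u\neq 0\}$. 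By convexity the remainder $E(\rho)=\mu\bigl[j(\bar u+\rho v_k)-j(\bar u)-\rho j'(\bar u;v_k)\bigr]$ is \emph{nonnegative}, so it enters your chain with the wrong sign: from $0\le J(u_\rho)-J(\bar u)$ you only get
\begin{equation*}
F''(\bar u+\theta\rho v_k)v_k^2 \ \ge\ -\frac{2E(\rho)}{\rho^2},
\end{equation*}
and $E(\rho)$ is in general only $o(\rho)$, not $o(\rho^2)$: if, say, $\operatorname*{meas}\{0<\bar u<s\}\sim\sqrt{s}$, then $E(\rho)\lesssim \rho k\,\sqrt{\rho k}\sim\rho^{3/2}$ and the right-hand side blows up as $\rho\downarrow 0$, so no conclusion follows. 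The fix is small and standard, and is precisely what the cited proof does: include in the definition of $A_k$ the additional requirement that $\bar u(x,t)=0$ or $|\bar u(x,t)|\ge 1/k$ (i.e.\ set $c_k=0$ on $\{0<|\bar u|<1/k\}$). Then for $\rho\le\rho_k$ with $\rho_k k\le 1/k$ no sign flip occurs off $\{\bar u=0\}$, while on $\{\bar u=0\}$ the increment is exactly $\rho|v_k|$, so $j$ is exactly affine along the arc; moreover $v_k\to v$ a.e.\ (hence in $L^2(Q)$ by dominated convergence) and $J'(\bar u;v_k)=0$ survive by your same arguments. For $\mu=0$ your proof is complete as written, since then $J=F$ and the issue does not arise.
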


\section{Second order sufficient conditions}\label{Se.3}
In this section, we establish the sufficient second order optimality conditions. In what follows, $\bar u$ will denote a control of $\uad$ satisfying \eqref{E2.17}. We denote by $\bar y$ and $\bar\varphi$ the associated state and adjoint state.

As mentioned in the introduction, we have to extend the cone $C_{\bar u}$ to formulate the second order sufficient conditions for optimality.

Looking at $J'(\bar u;v)$ for every $\tau > 0$ we consider the extended cone
\[G_{\bar u}^\tau=\Big\{v\in L^2(Q)\mbox{ satisfying }\eqref{E2.19}\mbox{ and } J'(\bar u;v)\leq \tau \big(\|z_v\|_{L^1(Q)}+\nu_\Omega\|z_v(\cdot,T)\|_{L^1(\Omega)}\big)\Big\}.\]
The extended cone $E_{\bar u}^\tau$ introduced in \eqref{E1.2} has been used in the literature to formulate the second order sufficient optimality conditions; see \cite{CRT2014}. The cone $G_{\bar u}^\tau$ introduced above is a smaller extension of $C_{\bar u}$ than $E_{\bar u}^\tau$. Indeed, given $E_{\bar u}^\tau$, for every
\[\tau'\leq \frac{\tau}{\sqrt{|\Omega|\max\{1,T\}}}\]
the embedding $G_{\bar u}^{\tau'}\subset E_{\bar u}^\tau$ holds.

On the other hand, using the characterizations of the cone $C_{\bar u}$ given by \eqref{E2.21} and \eqref{E2.22} the following extensions appear in a natural way as well.
\begin{align*}
 \mbox{If } \mu =0,\ D_{\bar u}^\tau = & \{v\in L^2(Q)\mbox{ satisfying }\eqref{E2.19}\mbox{ and } v(x,t)=0\mbox{ if }|\bar\varphi(x,t)|>\tau\}.\\
 \mbox{If }\mu >0,\
D_{\bar u}^\tau = &
 \Bigg\{v\in L^2(Q)\mbox{ satisfying }\eqref{E2.19}\mbox{ and}\\
                  &\hspace{1.2cm}  \ v(x,t) \left\{\begin{array}{cl}
                    \geq 0 & \mbox{ if }\bar\varphi(x,t) = -\mu\mbox{ and }\bar u(x,t)=0  \\
                    \leq 0 & \mbox{ if }\bar\varphi(x,t) = +\mu\mbox{ and }\bar u(x,t)=0  \\
                    =0 & \mbox{ if }\Big| |\bar\varphi(x,t)| -\mu \Big | >\tau
                  \end{array}\right. \Bigg\}.
\end{align*}
For the use of the cones $E_{\bar u}^\tau$ and $D_{\bar u}^\tau$ to formulate the second order sufficient optimality conditions and for a discussion of their application to the stability analysis of the control problem, the reader is referred to \cite{CRT2014}. In that paper it is proved that a sufficient second order condition based on the cone $D_{\bar u}^\tau$ leads to an $L^2(Q)$-weak local minimum, while the same condition based on the cone $E_{\bar u}^\tau$ implies that $\bar u$ is a strong local minimum. Hereafter we will prove that the condition based on the cone
\[C_{\bar u}^\tau = D_{\bar u}^\tau\cap G_{\bar u}^\tau\]
yields a strong local minimum $\bar u$.
Our main result is as follows:
\begin{theorem}\label{T3.1}
 Let $\bar u\in \uad$  satisfy the first order optimality condition \eqref{E2.17}. Suppose in addition that there exist $\delta>0$ and $\tau >0$ such that
 \begin{equation}
 F''(\bar u)v^2 \ge \delta\left(\|z_v\|^2_{L^2(Q)}+\nu_\Omega \|z_v(\cdot,T)\|^2_{L^2(\Omega)}\right)\quad \forall v \in C_{\bar u}^\tau, \label{E3.1}
\end{equation}
where $z_v = G'(\bar u)v$.
Then, there exist $\varepsilon>0$ and $\kappa>0$ such that
\begin{equation}\label{E3.2}
J(\bar u)+\frac{\kappa}{2}\left(\|y_u-\bar y\|^2_{L^2(Q)}+\nu_\Omega\|y_u(\cdot,T)-\bar y(\cdot,T)\|^2_{L^2(\Omega)}\right)  \leq J(u)
\end{equation}
for all $u\in \uad$ such that $\|y_u-\bar y\|_{L^\infty(Q)}<\varepsilon.$
\end{theorem}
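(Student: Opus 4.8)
The plan is to argue by contradiction, the standard technique for proving quadratic growth from a coercive second order condition. Suppose \eqref{E3.2} fails for every choice of $\varepsilon$ and $\kappa$. Then I can extract a sequence $u_k\in\uad$ with $\|y_{u_k}-\bar y\|_{L^\infty(Q)}\to 0$ such that
\[
J(u_k) < J(\bar u) + \frac{1}{2k}\Big(\|y_{u_k}-\bar y\|^2_{L^2(Q)}+\nu_\Omega\|y_{u_k}(\cdot,T)-\bar y(\cdot,T)\|^2_{L^2(\Omega)}\Big).
\]
Writing $\rho_k := \|u_k-\bar u\|_{L^2(Q)}$ (which tends to $0$ along a subsequence, since the states converge uniformly and hence the controls cannot stay bounded away from $\bar u$ in a way incompatible with smallness — this point needs care), I set $v_k := (u_k-\bar u)/\rho_k$ so that $\|v_k\|_{L^2(Q)}=1$. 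After passing to a subsequence, $v_k\rightharpoonup v$ weakly in $L^2(Q)$. By the continuity and compactness properties of $G'(\bar u)$ recorded after Theorem \ref{T2.2} and in Theorem \ref{T2.1}, the linearized states $z_{v_k}=G'(\bar u)v_k$ converge strongly, so $z_{v_k}\to z_v$ in the relevant norms.

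The first substantive step is to show the limit $v$ lies in the critical cone $C_{\bar u}^\tau = D_{\bar u}^\tau\cap G_{\bar u}^\tau$. The sign condition \eqref{E2.19} passes to the weak limit because the sign constraints define a closed convex set. For membership in $G_{\bar u}^\tau$ and $D_{\bar u}^\tau$ I must use the first order condition: from \eqref{E2.18c} and the Taylor expansion of $J$ at $\bar u$, the near-optimality of $u_k$ forces $J'(\bar u; u_k-\bar u)$ to be small, of the order of the quadratic remainder. Dividing by $\rho_k$ and using \eqref{E2.16} together with Corollary \ref{C2.10} should yield $J'(\bar u; v)\le \tau(\|z_v\|_{L^1(Q)}+\nu_\Omega\|z_v(\cdot,T)\|_{L^1(\Omega)})$ for the $G_{\bar u}^\tau$ part, and the structural vanishing condition for the $D_{\bar u}^\tau$ part (this is where the adjoint state $\bar\varphi$ and the pointwise characterization enter). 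I expect the $D_{\bar u}^\tau$ membership to be the delicate piece, since it requires showing $v$ vanishes where $\big||\bar\varphi|-\mu\big|>\tau$, which comes from the strict complementarity gap forcing $v_k$ to be controlled on that set.

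The second step is the second order Taylor expansion. Writing $J(u_k)-J(\bar u) = J'(\bar u;u_k-\bar u) + \frac12 F''(\bar u)(u_k-\bar u)^2 + o(\rho_k^2)$, where the remainder is controlled using the uniform Lipschitz-type bound on $\partial^2 f/\partial y^2$ and $\partial^2 L/\partial y^2$ from (A2), (A4), (A5) together with Lemma \ref{L2.4} relating $\|z_{u_k-\bar u}\|$ to $\|y_{u_k}-\bar y\|$. Using $J'(\bar u;u_k-\bar u)\ge 0$ and dividing by $\rho_k^2$, the near-optimality inequality gives
\[
\tfrac12 F''(\bar u)v_k^2 \le \frac{1}{2k}\cdot\frac{\|y_{u_k}-\bar y\|^2_{L^2(Q)}+\nu_\Omega\|\cdots\|^2}{\rho_k^2} + o(1).
\]
By \eqref{E2.12} in Lemma \ref{L2.4} the numerator is comparable to $\|z_{v_k}\|^2$, which stays bounded, so the right-hand side tends to $0$. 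Taking the limit and using lower semicontinuity of $F''(\bar u)$ along the strongly convergent $z_{v_k}$ yields $F''(\bar u)v^2\le 0$.

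The final step combines this with the coercivity hypothesis \eqref{E3.1}: since $v\in C_{\bar u}^\tau$, we get $\delta(\|z_v\|^2_{L^2(Q)}+\nu_\Omega\|z_v(\cdot,T)\|^2_{L^2(\Omega)})\le F''(\bar u)v^2\le 0$, forcing $z_v=0$. The contradiction is reached by showing $v\ne 0$ in the appropriate sense: strong convergence $z_{v_k}\to z_v=0$ together with the normalization must be incompatible. The main obstacle I anticipate is precisely this last nondegeneracy argument — because the normalization is in $L^2(Q)$ while coercivity is measured through the state $z_v$, I cannot directly conclude $v\ne 0$ from $\|v_k\|_{L^2(Q)}=1$; I expect to need the strong convergence of $v_k$ on the active/critical set (where the cone structure pins down the behavior) and the injectivity-type estimate \eqref{E2.12} to transfer $z_v=0$ back to $\|v_k\|_{L^2(Q)}\to 0$, contradicting the normalization. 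Establishing this strong $L^2$ convergence of $v_k$ to $v$, rather than merely weak, is the crux and is where the simultaneous use of both $D_{\bar u}^\tau$ and $G_{\bar u}^\tau$ — the whole point of the smaller cone $C_{\bar u}^\tau$ — should be essential.
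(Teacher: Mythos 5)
There is a genuine gap, and it sits exactly where you predicted: the contradiction--normalization scheme cannot be closed for this problem, and the paper does not use it. Two independent points fail. First, your setup already breaks: $\|y_{u_k}-\bar y\|_{L^\infty(Q)}\to 0$ does \emph{not} imply $\rho_k=\|u_k-\bar u\|_{L^2(Q)}\to 0$. The control-to-state map is smoothing, so highly oscillatory controls far from $\bar u$ in $L^2(Q)$ can produce states uniformly close to $\bar y$; this is precisely why strong local minimality (Definition \ref{De.2.7}) is phrased in terms of the state distance. Without $\rho_k\to 0$ you lose both the Taylor remainder estimate $o(\rho_k^2)$ and the smallness of $J'(\bar u;u_k-\bar u)/\rho_k$ needed for cone membership of the limit. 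Second, and more fundamentally, the final nondegeneracy step has no mechanism: from $v\in C^\tau_{\bar u}$, $F''(\bar u)v^2\le 0$ and \eqref{E3.1} you only get $z_v=0$, and indeed $v=0$ is perfectly consistent with $\|v_k\|_{L^2(Q)}=1$ (take $v_k\rightharpoonup 0$ oscillating; then $z_{v_k}\to 0$ strongly by compactness of $G'(\bar u)$). Your hope of ``transferring $z_v=0$ back to $\|v_k\|_{L^2(Q)}\to 0$ via \eqref{E2.12}'' is backwards: \eqref{E2.12} bounds the state increment \emph{below} by the linearized state, i.e.\ it compares $y_u-\bar y$ with $z_{u-\bar u}$; there is no estimate of the form $\|v\|_{L^2}\le C\|z_v\|$, and none can exist since $G'(\bar u)$ is compact. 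In the Tikhonov case $\gamma>0$ the term $\gamma\|v_k\|^2_{L^2}$ plus weak lower semicontinuity rescues the contradiction argument; its absence here is the whole point of the paper (cf.\ Dunn's example cited in Section \ref{S1}), so no repair of this scheme along the lines you sketch is available.

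The paper's proof is instead direct and splits $u\in\uad$ with $\|y_u-\bar y\|_{L^\infty(Q)}<\varepsilon$ into three cases. If $u-\bar u\in C^\tau_{\bar u}$, coercivity \eqref{E3.1} plus the perturbation estimate \eqref{E3.3} give quadratic growth at once. If $u-\bar u\notin G^\tau_{\bar u}$, the first-order term alone dominates: the defining inequality of $G^\tau_{\bar u}$ fails, and the interpolation $\|z\|^2_{L^2}\le 2\varepsilon_2\|z\|_{L^1}$ --- valid because Lemma \ref{L2.4} gives $\|z_{u-\bar u}\|_{C(\bar Q)}<2\varepsilon_2$, which is exactly where the \emph{state} closeness (rather than control closeness) is exploited --- converts $J'(\bar u;u-\bar u)>\tau\|z_{u-\bar u}\|_{L^1}$ into a bound beating the quadratic terms via \eqref{E2.15}. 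In the remaining case ($u-\bar u\in G^{\tau^*}_{\bar u}\setminus D^{\tau^*}_{\bar u}$) the key idea you are missing appears: a pointwise decomposition $u-\bar u=v+w$, with $v$ supported where the adjoint gap is at most $\tau$ and $w$ on the complement; one shows $v\in C^\tau_{\bar u}$ (so \eqref{E3.1} applies to $v$) while $J'(\bar u;w)\ge\tau\|w\|_{L^1(Q)}$ absorbs, through \eqref{E2.6} and Young-type inequalities, all mixed and quadratic terms in $z_w$. No limit process, no normalization, and no weak compactness is ever invoked; this is what makes the result reachable without a Tikhonov term, and it is also where the simultaneous use of $D^\tau_{\bar u}$ and $G^\tau_{\bar u}$ that you correctly identified as essential actually enters.
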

Note that if $\tau < \tau'$, then $C^{\tau}_{\bar u}\subseteq C^{\tau'}_{\bar u}$, and hence without loss of generality we can suppose that, for $\mu>0$, $\tau<\mu$.
Throughout the proof of Theorem \ref{T3.1} we will use the following lemma. A proof of an analogous result can be found in \cite{CMR2019,Casas-Troltzsch2016}, so we omit it.
\begin{lemma}\label{Le.3.2}
For all $\rho>0$ there exists $\varepsilon_\rho>0$ such that for every $u\in \uad$ satisfying $\|y_u -\bar y\|_{L^\infty(Q)} <\varepsilon_\rho$, there holds
\begin{equation}\label{E3.3}
|\left[F''(\bar u+\theta(u-\bar u))-F''(\bar u)\right]v^2| \leq \rho \Big(\|z_v\|^2_{L^2(Q)}+\nu_\Omega \|z_v(\cdot,T)\|^2_{L^2(\Omega)}\Big)
\end{equation}
for all $v\in L^2(Q)$ and all $\theta \in [0,1]$, where $z_v = G'(\bar u)v$.
\end{lemma}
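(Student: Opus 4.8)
The plan is to argue by contradiction, using the standard normalization-and-weak-limit scheme for second order sufficiency, but organized so that the smaller cone $C_{\bar u}^\tau$ is exactly what captures the limiting direction. Suppose \eqref{E3.2} fails for every choice of constants; taking $\varepsilon=\kappa=1/k$ produces a sequence $u_k\in\uad$ with $\|y_{u_k}-\bar y\|_{L^\infty(Q)}<1/k$ and
$$J(u_k)-J(\bar u)<\frac{1}{2k}\Big(\|y_{u_k}-\bar y\|_{L^2(Q)}^2+\nu_\Omega\|y_{u_k}(\cdot,T)-\bar y(\cdot,T)\|_{L^2(\Omega)}^2\Big).$$
Set $v_k=u_k-\bar u$, which satisfies the sign condition \eqref{E2.19}. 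Since $F$ is $C^2$ and $j$ is convex, there is $\theta_k\in(0,1)$ with $J(u_k)-J(\bar u)\ge J'(\bar u;v_k)+\tfrac12 F''(\bar u+\theta_k v_k)v_k^2$ (using $j'(\bar u;v_k)\le j(u_k)-j(\bar u)$ and $J'(\bar u;v_k)=F'(\bar u)v_k+\mu j'(\bar u;v_k)$). Combining this with the contradiction inequality, with $J'(\bar u;v_k)\ge0$ from Proposition \ref{P2.1}, and with the lower bound \eqref{E2.12} of Lemma \ref{L2.4}, I would obtain, writing $\rho_k:=\|z_{v_k}\|_{L^2(Q)}+\nu_\Omega\|z_{v_k}(\cdot,T)\|_{L^2(\Omega)}$ with $z_{v_k}=G'(\bar u)v_k$,
$$0\le J'(\bar u;v_k)\le J'(\bar u;v_k)+\tfrac12 F''(\bar u+\theta_k v_k)v_k^2\le \frac{C}{k}\rho_k^2.$$

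Next I would normalize: put $w_k=v_k/\rho_k$ and $\zeta_k=z_{v_k}/\rho_k=G'(\bar u)w_k$, so that $\|\zeta_k\|_{L^2(Q)}+\nu_\Omega\|\zeta_k(\cdot,T)\|_{L^2(\Omega)}=1$. Dividing the last display by $\rho_k^2$ gives $J'(\bar u;w_k)\to0$ and $\limsup_k F''(\bar u+\theta_k v_k)w_k^2\le0$; Lemma \ref{Le.3.2} with $\|\zeta_k\|=1$ then upgrades the latter to $\limsup_k F''(\bar u)w_k^2\le0$. By \eqref{E2.5} the sequence $\zeta_k$ is bounded in $L^2(0,T;H^1_0(\Omega))\cap C([0,T];L^2(\Omega))$, so along a subsequence $\zeta_k\rightharpoonup\zeta$, and I would pass to the limit in \eqref{E2.4} to identify a direction $w$ with $\zeta=G'(\bar u)w$.

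The core of the argument is then to show $w\in C_{\bar u}^\tau=D_{\bar u}^\tau\cap G_{\bar u}^\tau$ with $\zeta\neq0$. The sign condition passes to the limit. Since the integrand of $J'(\bar u;w_k)$ is pointwise nonnegative (Corollary \ref{C2.10}), $J'(\bar u;w_k)\to0$ forces $w_k\to0$ in $L^1$ on the set where that integrand dominates a positive multiple of $|w_k|$, namely $\{|\bar\varphi|>\tau\}$ for $\mu=0$ (resp.\ $\{\,|\,|\bar\varphi|-\mu\,|>\tau\}$ for $\mu>0$), so $w=0$ there and $w\in D_{\bar u}^\tau$. Weak lower semicontinuity of $J'(\bar u;\cdot)$ on sign-condition directions together with $J'(\bar u;w_k)\to0$ and $J'(\bar u;w)\ge0$ yields $J'(\bar u;w)=0$; as the defining inequality of $G_{\bar u}^\tau$ has a nonnegative right-hand side, $w\in G_{\bar u}^\tau$ follows immediately. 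Hence the hypothesis \eqref{E3.1} applies, giving $F''(\bar u)w^2\ge\delta(\|\zeta\|_{L^2(Q)}^2+\nu_\Omega\|\zeta(\cdot,T)\|_{L^2(\Omega)}^2)$. If in addition $\zeta_k\to\zeta$ strongly, then by \eqref{E2.15} one has $F''(\bar u)w_k^2\to F''(\bar u)w^2\le0$, and the two inequalities force $\zeta=0$, contradicting $\|\zeta\|=1$.

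The hardest part will be upgrading $\zeta_k\rightharpoonup\zeta$ to \emph{strong} $L^2$ convergence, which simultaneously certifies $\zeta\neq0$ (that the normalization survives in the limit) and legitimizes the passage $F''(\bar u)w_k^2\to F''(\bar u)w^2$. Because only $\|y_{u_k}-\bar y\|_{L^\infty}\to0$ is assumed, the normalized controls $w_k=v_k/\rho_k$ need not be bounded in $L^2(Q)$ — the quotient can blow up when $v_k$ concentrates on small sets — so one cannot merely invoke an Aubin--Lions compactness argument on \eqref{E2.4}. This is precisely where the $L^1$ ingredients enter: the estimate \eqref{E2.6} of Lemma \ref{L2.3} and the $L^1$-based defining inequality of $G_{\bar u}^\tau$ let one control $\zeta_k$ through the $L^1$ norm of $w_k$ off the region $\{\,|\,|\bar\varphi|-\mu\,|>\tau\}$, where $w_k\to0$, while the compactness of $G'(\bar u)$ into $L^2(Q)$ handles the part that is genuinely $L^2$-bounded. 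Making this decomposition quantitative, and checking that the time-$T$ traces converge in $L^2(\Omega)$, is the delicate step; the remaining pieces (the comparability used in the first display, the limit passage in \eqref{E2.4}, and the continuity of $F''$) are routine given the results already established.
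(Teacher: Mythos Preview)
Your proposal does not address the stated lemma at all: Lemma~\ref{Le.3.2} is a continuity estimate for the second derivative $F''$, asserting that $[F''(\bar u+\theta(u-\bar u))-F''(\bar u)]v^2$ is small whenever $\|y_u-\bar y\|_{L^\infty(Q)}$ is small. What you have written is instead a proof sketch for Theorem~\ref{T3.1} (the sufficient second order condition), and in fact you \emph{invoke} Lemma~\ref{Le.3.2} as a tool in the middle of your argument. So there is no proof of the statement at hand.

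For the record, the paper omits the proof of Lemma~\ref{Le.3.2} entirely, referring to \cite{CMR2019,Casas-Troltzsch2016}. The argument there is direct, not by contradiction: one writes out $F''(u_\theta)v^2-F''(\bar u)v^2$ explicitly using the formula in Theorem~\ref{T2.5}, splits it into differences involving $\partial^2 L/\partial y^2$, $\partial^2 f/\partial y^2$, $\partial^2 L_\Omega/\partial y^2$, the adjoint states $\varphi_{u_\theta}-\bar\varphi$, and the linearized states $G'(u_\theta)v-G'(\bar u)v$, and then uses the uniform-continuity assumptions in (A2), (A4), (A5) together with the $L^\infty$ continuity of the state, adjoint, and linearized-state maps to bound each piece by $\rho(\|z_v\|_{L^2(Q)}^2+\nu_\Omega\|z_v(\cdot,T)\|_{L^2(\Omega)}^2)$ once $\|y_u-\bar y\|_{L^\infty(Q)}$ is small enough. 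None of the cone structure, normalization, or compactness machinery in your proposal is relevant to this lemma.
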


{\em Proof of Theorem \ref{T3.1}.}
Consider $u\in \uad$ such that
 $\|y_u -\bar y\|_{L^\infty(Q)} <\varepsilon$,
where $\varepsilon$  will be fixed later independently of $u$; see \eqref{E3.17} below.

A second order Taylor expansion yields the existence of $\theta\in(0,1)$ such that
\begin{align}
  F(u) =& F(\bar u)+F'(\bar u)(u-\bar u)+\frac{1}{2}F''(u_\theta)(u-\bar u)^2, \label{E3.4}
\end{align}
where $u_\theta = \bar u+\theta(u-\bar u)$. Using this and the convexity of  $j(\cdot)$, we have
\begin{align}
  J(u) = & F(u) + \mu j(u) \notag \\
  = & F(\bar u) + F'(\bar u)(u-\bar u) + \frac{1}{2}F''(u_\theta)(u-\bar u)^2 + \mu (j(u)-j(\bar u)) + \mu j(\bar u) \notag\\
\geq &  J(\bar u) + F'(\bar u)(u-\bar u) + \mu j'(\bar u;u-\bar u) + \frac{1}{2}F''(u_\theta)(u-\bar u)^2\notag\\
  =  &  J(\bar u) + J'(\bar u;u-\bar u) +\frac{1}{2}F''(\bar u)(u-\bar u) + \frac{1}{2}(F''(u_\theta)-F''(\bar u))(u-\bar u)^2. \label{E3.5}
\end{align}

In a first step, we will prove the existence of $\varepsilon_0$ such that
\begin{equation}\label{E3.6}
J(\bar u)+\frac{\delta}{4}\Big(\|z_{u-\bar u}\|^2_{L^2(Q)}+\nu_\Omega\|z_{u-\bar u}(\cdot,T)\|^2_{L^2(\Omega)}\Big) \leq J(u)\
\end{equation}
 for all $u\in \uad$ such that $\|y_u -\bar y\|_{L^\infty(Q)}  <\varepsilon_0$.
We will split the proof of this first step into three cases.

\medskip

{\em Case 1:} $u-\bar u\in C^\tau_{\bar u}$.
Applying Lemma \ref{Le.3.2} with $\rho = \delta/2$ we deduce the existence of $\varepsilon_1>0$ such that \eqref{E3.3} holds for every $u\in\uad$ such that $\|y_u-\bar y\|_{L^\infty(Q)} < \varepsilon_1$.
Inserting this  inequality in \eqref{E3.5} and using the variational inequality \eqref{E2.17} and the second order condition \eqref{E3.1}, we obtain
\begin{align}
  J(u)\geq  & J(\bar u) + \frac{\delta}{2} \Big(\|z_{u-\bar u}\|^2_{L^2(Q)}+\nu_\Omega \|z_{u-\bar u}(\cdot,T)\|^2_{L^2(\Omega)}\Big) \notag\\
        & \qquad -\frac{\delta}{4} \Big(\|z_{u-\bar u}\|^2_{L^2(Q)}+ \nu_\Omega \|z_{u-\bar u}(\cdot,T)\|^2_{L^2(\Omega)}\Big)\notag \\
  \geq  & J(\bar u) + \frac{\delta}{4} \Big(\|z_{u-\bar u}\|^2_{L^2(Q)}+ \nu_\Omega \|z_{u-\bar u}(\cdot,T)\|^2_{L^2(\Omega)}\Big).\notag
\end{align}

\medskip

{\em Case 2:} $u-\bar u\not\in G_{\bar u}^\tau$.  In this case, we consider
\begin{equation}
\notag
\varepsilon_2 = \min\left\{\varepsilon_1, \frac{2}{C_{f,M_\infty}C_{Q,\infty}T^{1/\pd}|\Omega|^{1/\qd}}, \frac{\tau}{\delta+M_2}\right\},
\end{equation}
where $\varepsilon_1$ is taken as in the previous case, and $C_{f,M_\infty}$, $C_{Q,\infty}$ and $M_2$ are introduced in \eqref{E2.8}, Lemma \ref{L2.3} and \eqref{E2.15}, respectively.
Then, from Lemma \ref{L2.4}, if $\|y_u-\bar y\|_{L^\infty(Q)} < \varepsilon_2$, we can estimate $\|z_{u-\bar u}\|_{C(\bar Q)}<2\varepsilon_2$.
Therefore we have
\begin{equation}
\|z_{u-\bar u}\|^2_{L^2(Q)} + \nu_\Omega \|z_{u-\bar u}(\cdot,T)\|^2_{L^2(\Omega)} \leq 2\varepsilon_2 \Big(\|z_{u-\bar u}\|_{L^1(Q)} + \nu_\Omega \|z_{u-\bar u}(\cdot,T)\|_{L^1(\Omega)}\Big).
\label{E3.7}
\end{equation}

Let us estimate the terms of \eqref{E3.5}. Since $u-\bar u$ satisfies the sign condition \eqref{E2.19} and $u-\bar u\not\in G_{\bar u}^\tau$, then with \eqref{E3.7} we get
\begin{align}
  J'(\bar u;u-\bar u) > &\tau \Big(\|z_{u-\bar u}\|_{L^1(Q)}+ \nu_\Omega \|z_{u-\bar u}(\cdot,T)\|_{L^1(\Omega)} \Big)\notag\\
  \geq  & \frac{\tau}{2\varepsilon_2} \Big(\|z_{u-\bar u}\|^2_{L^2(Q)} + \nu_\Omega \|z_{u-\bar u}(\cdot,T)\|^2_{L^2(\Omega)} \Big). \label{E3.8}
\end{align}
For the remaining terms, according to the choice we made for $\varepsilon_1$ in Case 1 and using \eqref{E2.15}, we infer
\begin{align}
    |F''(\bar u)(u-\bar u)^2|  +&  |[F''(u_\theta) - F''(\bar u)](u-\bar u)^2 |\notag \\
  \leq  & \left(M_2 +\frac{\delta}{2}\right) \Big(\|z_{u-\bar u}\|_{L^2(Q)}^2 + \nu_\Omega\|z_{u-\bar u}(\cdot,T)\|_{L^2(\Omega)}^2\Big).
  \label{E3.9}
\end{align}
From \eqref{E3.5}, \eqref{E3.8} and \eqref{E3.9} we deduce for $\|y_u-\bar y\|_{L^\infty(Q)}<\varepsilon_2$
\begin{align}
    J(u)
  \geq  & J(\bar u) + \left(\frac{\tau}{2\varepsilon_2}-\frac{M_2}{2}-\frac{\delta}{4}\right) \Big(\|z_{u-\bar u}\|^2_{L^2(Q)} + \nu_\Omega \|z_{u-\bar u}(\cdot,T)\|^2_{L^2(\Omega)} \Big) \notag\\
  \geq  & J(\bar u) + \frac{\delta}{4} \Big(\|z_{u-\bar u}\|^2_{L^2(Q)} + \nu_\Omega \|z_{u-\bar u}(\cdot,T)\|^2_{L^2(\Omega)} \Big).\notag
\end{align}

\medskip

{\em Case 3:} $u-\bar u\not\in D_{\bar u}^\tau$ and $u-\bar u\in G_{\bar u}^\tau$. Now we cannot use the second order condition \eqref{E3.1}, nor is the first derivative big enough to assure optimality. Hence, our method of proof is different from the previous two cases. First we define $\tau^* = \tau/\max\{1,C_{Q,1}\}\leq \tau$, where $C_{Q,1}$ is introduced in \eqref{E2.6}. If $u-\bar u\not\in G_{\bar u}^{\tau^*}$ holds, then we can argue as in the proof of the Case 2 to deduce that \eqref{E3.6} holds for $\|y_u - \bar y\|_{L^\infty(Q)} < \varepsilon_3$ with
\[
\varepsilon_3 = \min\left\{\varepsilon_2,\frac{\tau^*}{\delta+M_2}\right\}.
\]
Assume now that $u-\bar u\in G^{\tau^*}_{\bar u}$. Obviously $D_{\bar u}^{\tau^*}\subset D_{\bar u}^\tau$ holds, hence $u-\bar u\not\in D_{\bar u}^{\tau^*}$.

We define the set $W$ as follows:
\begin{align*}
\mbox{if }\mu =0,\quad W=\big\{ (x,t)\in  Q: &\
                    |\bar\varphi(x,t)|  >\tau\mbox{ and }u(x,t)-\bar u(x,t)\neq 0
                 \big\},
\\
\mbox{if }\mu >0,\quad W = \big \{ (x,t)\in  Q: &\
                       \bar\varphi(x,t) = -\mu\mbox{ and }\bar u(x,t)=0\mbox{ and }u(x,t) < 0,  \\
                 \mbox{or }&\      \bar\varphi(x,t) = +\mu\mbox{ and }\bar u(x,t)=0 \mbox{ and }u(x,t) >0,   \\
                  \mbox{or }&\  \Big| |\bar\varphi(x,t)| -\mu \Big | >\tau\mbox{ and }u(x,t)\neq \bar u(x,t)
                   \big\},
\end{align*}
and denote $V=Q\setminus W$.
Associated with $V$ we define the functions
\[
v(x,t)=\left\{\begin{array}{cc}
                    0 & \mbox{ if }(x,t)  \in W, \\
                    u(x,t)-\bar u(x,t) & \mbox{ if }(x,t)\in V
                                      \end{array}\right.
\]
and $w = (u - \bar u) - v$. We first notice three properties of $w$. In \cite[Proposition 3.6]{CRT2014} it is proved that
\begin{align}
J'(\bar u;w)\geq  \tau\|w\|_{L^1(W)} = \tau\|w\|_{L^1(Q)}.\label{E3.10}
\end{align}
Using this and the fact that the supports of $w$ and $v$ are disjoint, and noticing that $v$ satisfies the sign condition \eqref{E2.19}, which allows us to use \eqref{E2.20}, we obtain
  \begin{align}
J'(\bar u;u-\bar u) = J'(\bar u;v) + J'(\bar u;w) \geq J'(\bar u;v) + \tau\|w\|_{L^1(Q)} \ge \tau\|w\|_{L^1(Q)}.\label{E3.11}
  \end{align}
Finally, using \eqref{E2.6}, we have 
\begin{align}\|z_w\|_{L^1(Q)}+\|z_w(\cdot,T)\|_{L^1(\Omega)}\leq C_{Q,1} \|w\|_{L^1(Q)}\leq \max\{1,C_{Q,1}\} \|w\|_{L^1(Q)}.\label{E3.12}\end{align}
Regarding $v$, it is clear that $v\in D^\tau_{\bar u}$. From \eqref{E3.11} and \eqref{E3.12}   we get
\begin{align*}
  J'(\bar u;u-\bar u)  &   \ge  J'(\bar u;v) + \frac{\tau}{\max\{1,C_{Q,1}\}} \Big(\|z_w\|_{L^1(Q)}+ \nu_\Omega\|z_w(\cdot,T)\|_{L^1(\Omega)}\Big)\\
   &= J'(\bar u;v) + \tau^* \Big(\|z_w\|_{L^1(Q)}+\nu_\Omega \|z_w(\cdot,T)\|_{L^1(\Omega)}\Big).
\end{align*}
Since $u-\bar u\in G_{\bar u}^{\tau^*}$, we obtain
\begin{align*}
J'(\bar u;u- \bar u)\leq& \tau^*\Big(\|z_{u-\bar u}\|_{L^1(Q)}+ \|z_{u-\bar u}(\cdot,T)\|_{L^1(\Omega)}\Big) \\
\leq &\tau^*\Big(\|z_v\|_{L^1(Q)}+ \nu_\Omega\|z_v(\cdot,T)\|_{L^1(\Omega)}\Big) \\ &+ \tau^*\Big(\|z_w\|_{L^1(Q)}+ \nu_\Omega\|z_w(\cdot,T)\|_{L^1(\Omega)}\Big)
\end{align*}
Altogether, we conclude
\[
J'(\bar u;v) \leq \tau^*\Big(\|z_v\|_{L^1(Q)}+ \nu_\Omega\|z_v(\cdot,T)\|_{L^1(\Omega)}\Big).
\]
Therefore $v\in G_{\bar u}^{\tau^*}\subset G_{\bar u}^\tau$ and hence $v\in C^\tau_{\bar u}$ holds.

Now we combine the techniques of Cases 1 and 2. On one hand, we have that $v$ belongs to $ C^\tau_{\bar u}$, so that we can use the second order condition \eqref{E3.1}. On the other hand, the function $w$ satisfies that its $L^1(Q)$-norm bounds from below the directional derivative  $J'(\bar u;u-\bar u)$. Let us see in detail how to do this. We start at the inequality \eqref{E3.5}. Applying Lemma \ref{Le.3.2} we deduce the existence of $\varepsilon_4 > 0$ such that
\begin{equation}
|[F''(u_\theta) - F''(\bar u)](u - \bar u)^2| \le \frac{\delta}{4}\Big(\|z_{u-\bar u}\|^2_{L^2(Q)}+\nu_\Omega\|z_{u-\bar u}(\cdot,T)\|^2_{L^2(\Omega)}\Big)
\label{E3.13}
\end{equation}
for all $u\in U_{ad}$ such that $\|y_u - \bar y\|_{L^\infty(Q)} < \varepsilon_4$.
Now, we take
\begin{equation}
\varepsilon_0 = \min\left\{\varepsilon_3,\varepsilon_4,\frac{\tau^*}{M_2 + \frac{8M_2^2}{\delta}+\frac{21\delta}{4}}\right\}.
\notag
\end{equation}
From now on, we will assume that $\|y_u - \bar y\|_{L^\infty(Q)} < \varepsilon_0$. Using that $u - \bar u = v + w$ and applying the inequalities \eqref{E2.15}, \eqref{E3.1}, \eqref{E3.10} and \eqref{E3.13} we deduce from \eqref{E3.5}
\begin{align}
&J(u) \ge J(\bar u) + \tau\|w\|_{L^1(Q)} + \frac{1}{2}F''(\bar u)v^2 + \frac{1}{2}F''(\bar u)w^2\notag\\
& + F''(\bar u)(v,w) - \frac{1}{2}|[F''(u_\theta) - F''(\bar u)](u - \bar u)^2|\notag\\
&\ge J(\bar u) + \tau\|w\|_{L^1(Q)} + \frac{\delta}{2}\Big(\|z_v\|^2_{L^2(Q)}+\nu_\Omega \|z_v(\cdot,T)\|^2_{L^2(\Omega)}\Big)\notag\\
&-\frac{M_2}{2}\Big(\|z_w\|^2_{L^2(Q)}+\nu_\Omega \|z_w(\cdot,T)\|^2_{L^2(\Omega)}\Big)\notag\\
& - M_2\Big(\|z_v\|_{L^2(Q)}\|z_w\|_{L^2(Q)}+\nu_\Omega \|z_v(\cdot,T)\|_{L^2(\Omega)}\|z_w(\cdot,T)\|_{L^2(\Omega)}\Big)\notag\\
&- \frac{\delta}{8}\Big(\|z_{u - \bar u}\|^2_{L^2(Q)}+\nu_\Omega \|z_{u - \bar u}(\cdot,T)\|^2_{L^2(\Omega)}\Big).\label{E3.14}
\end{align}
Using the inequality $ab \le \frac{1}{2}a^2 + \frac{1}{2}b^2$ for appropriate real numbers $a, b$, we infer
\begin{align*}
&\|z_v\|_{L^2(Q)}\|z_w\|_{L^2(Q)}+\nu_\Omega \|z_v(\cdot,T)\|_{L^2(\Omega)}\|z_w(\cdot,T)\|_{L^2(\Omega)}\\
&\le \frac{\delta}{16M_2}\Big(\|z_v\|^2_{L^2(Q)}+\nu_\Omega \|z_v(\cdot,T)\|^2_{L^2(\Omega)}\Big)\\
&+\frac{4M_2}{\delta}\Big(\|z_w\|^2_{L^2(Q)}+\nu_\Omega \|z_w(\cdot,T)\|^2_{L^2(\Omega)}\Big).
\end{align*}
Inserting this estimate in \eqref{E3.14} and using \eqref{E3.12} and the definition of $\tau^*$, we obtain
\begin{align}
  J(u) \geq & J(\bar u) + \tau^*\Big(\|z_w\|_{L^1(Q)}+ \nu_\Omega\|z_w(\cdot,T)\|_{L^1(\Omega)}\Big)\notag  \\
   & +\frac{7\delta}{16}\Big(\|z_v\|^2_{L^2(Q)} + \nu_\Omega\|z_v(\cdot,T)\|^2_{L^2(\Omega)}\Big)\notag \\
   & - \left(\frac{M_2}{2}+4\frac{M_2^2}{\delta}\right)
   \Big(\|z_w\|^2_{L^2(Q)} + \nu_\Omega\|z_w(\cdot,T)\|^2_{L^2(\Omega)}\Big)\notag \\
   &  -\frac{\delta}{8}(\|z_{u-\bar u}\|^2_{L^2(Q)} + \nu_\Omega\|z_{u-\bar u}(\cdot,T)\|^2_{L^2(\Omega)}\Big).
   \label{E3.15}
\end{align}
Using that $u-\bar u=v+w$, we get
\begin{align*}
 \|z_v&\|^2_{L^2(Q)} + \nu_\Omega\|z_v(\cdot,T)\|^2_{L^2(\Omega)}  =
 \|z_{u-\bar u}-z_w\|^2_{L^2(Q)} + \nu_\Omega\|z_{u-\bar u}(\cdot,T)-z_w(\cdot,T)\|^2_{L^2(\Omega)}\\
  = & \Big(\|z_{u-\bar u}\|^2_{L^2(Q)} + \nu_\Omega\|z_{u-\bar u}(\cdot,T)\|^2_{L^2(\Omega)}\Big) + \Big(\|z_w\|^2_{L^2(Q)} + \nu_\Omega\|z_w(\cdot,T)\|^2_{L^2(\Omega)}\Big)\\
  & - 2 \Big(\|z_{u-\bar u}\|_{L^2(Q)} \|z_w\|_{L^2(Q)}+ \nu_\Omega\|z_{u-\bar u}(\cdot,T)\|_{L^2(\Omega)}\|z_w(\cdot,T)\|_{L^2(\Omega)}\Big)\\
  \geq &
  \frac{6}{7} \Big(\|z_{u-\bar u}\|^2_{L^2(Q)} + \nu_\Omega\|z_{u-\bar u}(\cdot,T)\|^2_{L^2(\Omega)}\Big)- 6 \Big(\|z_w\|^2_{L^2(Q)} + \nu_\Omega\|z_w(\cdot,T)\|^2_{L^2(\Omega)}\Big).
\end{align*}
Combining this with \eqref{E3.15}, we obtain
\begin{align}
  J(u) \geq & J(\bar u) + \tau^*\Big(\|z_w\|_{L^1(Q)}+ \nu_\Omega\|z_w(\cdot,T)\|_{L^1(\Omega)}\Big)\notag  \\
   & +\frac{\delta}{4}\Big(\|z_{u-\bar u}\|^2_{L^2(Q)} + \nu_\Omega\|z_{u-\bar u}(\cdot,T)\|^2_{L^2(\Omega)}\Big)\notag \\
   & - \left(\frac{M_2}{2}+4\frac{M_2^2}{\delta}+\frac{21\delta}{8}\right)
   \Big(\|z_w\|^2_{L^2(Q)} + \nu_\Omega\|z_w(\cdot,T)\|^2_{L^2(\Omega)}\Big).\label{E3.16}
\end{align}
Next we define the constants
\[C_{Q,3} =  2 C_{Q,\infty}^3(\beta-\alpha)^{2} (T+\nu_\Omega)|\Omega|\mbox{ and }\varepsilon_5 = \min\{ \varepsilon_2,8\frac{\varepsilon_0^3}{C_{Q,3}}\}, \]
where $C_{Q,\infty}$ is given in Lemma \ref{L2.3},
and assume $\|y_u-\bar y\|_{L^\infty(Q)}<\varepsilon_5$.
From \eqref{E3.11}, the fact that $u-\bar u\in G^{\tau}_{\bar u}$, Lemma \ref{L2.4}, and using that $\varepsilon_5\leq \varepsilon_2$, we deduce that
\begin{align*}
  \tau \|w\|_{L^1(Q)}\leq & J'(\bar u;u-\bar u)
  \leq
\tau\left(\|z_{u-\bar u}\|_{L^1(Q)} + \nu_\Omega\|z_{u-\bar u}(\cdot,T)\|_{L^1(Q)}\right)\\
\leq  &2\tau(|Q|+\nu_\Omega|\Omega|)\varepsilon_5= 2\tau(T+\nu_\Omega)|\Omega|\varepsilon_5.\end{align*}
Since $\|w\|_{L^\infty(Q)}\leq \beta-\alpha$, using the above inequality and  $\varepsilon_5^{1/3}\leq 2\varepsilon_0/C_{Q,3}^{1/3}$, we deduce
\begin{align*}
\|w&\|_{L^3(Q)} =  \left(\int_Q |w(x,t|^3 dx dt\right)^{1/3}
\leq  \left(\int_Q (\beta-\alpha)^2|w(x,t)| dx dt\right)^{1/3}\\
= & (\beta-\alpha)^{2/3} \left(\|w\|_{L^1(Q)}\right)^{1/3}
\leq (\beta-\alpha)^{2/3} \big(2(T+\nu_\Omega)|\Omega|)\big)^{1/3}\varepsilon_5^{1/3}
\leq \frac{2}{C_{Q,\infty}}\varepsilon_0.
\end{align*}
And using Lemma \ref{L2.3}, we obtain the estimate:
\[\|z_w\|_{L^\infty(Q)}\leq C_{Q,\infty}\|w\|_{L^3(Q)}\leq 2\varepsilon_0.\]
Using this, we have
\begin{align*}
  \tau^*\Big(\|z_w& \|_{L^1(Q)}  + \nu_\Omega\|z_w(\cdot,T)\|_{L^1(\Omega)}\Big)  \\ & - \left(\frac{M_2}{2}+4\frac{M_2^2}{\delta}+\frac{21\delta}{8}\right)
   \Big(\|z_w\|^2_{L^2(Q)} + \nu_\Omega\|z_w(\cdot,T)\|^2_{L^2(\Omega)}\Big) \\
   \geq & \left\{\frac{\tau^*}{2\varepsilon_0}-
   \left(\frac{M_2}{2}+4\frac{M_2^2}{\delta}+\frac{21\delta}{8}\right)\right\}
   \Big(\|z_w\|^2_{L^2(Q)} + \nu_\Omega\|z_w(\cdot,T)\|^2_{L^2(\Omega)}\Big) \geq 0,
\end{align*}
where the last inequality follows from the definition of $\varepsilon_0$. This combined with \eqref{E3.16} yields \eqref{E3.6}.

\bigskip

To conclude the proof, using the second part of Lemma \ref{L2.4}, with
\begin{equation}\label{E3.17}
\varepsilon = \min\left\{\varepsilon_0,\varepsilon_5,\frac{1}{C_{f,M_\infty}C_{Q,2}}\right\},
\end{equation}
and taking into account that $\nu_\Omega\in\{0,1\}$, we infer
\begin{align*}\|z_{u-\bar u}\|^2_{L^2(Q)}+\nu_\Omega&\|z_{u-\bar u}(\cdot,T)\|^2_{L^2(\Omega)}\\  \geq &\frac{1}{8}\Big(\|y_u-\bar y\|^2_{L^2(Q)}+\nu_\Omega\|y_u(\cdot,T)-\bar y(\cdot,T)\|^2_{L^2(\Omega)}\Big).
\end{align*}
Using this and \eqref{E3.6} we obtain
\[
  J(u) \geq J(\bar u)+\frac{\delta}{32}\Big(\|y_u-\bar y\|^2_{L^2(Q)}+\nu_\Omega\|y_u(\cdot,T)-\bar y(\cdot,T)\|^2_{L^2(\Omega)}\Big),
\]
and \eqref{E3.2} follows for $\kappa=\delta/16$.
\endproof

Notice that in Case 3 we did not use explicitly that $u-\bar u\not\in D^{\tau^*}_{\bar u}$. Observe that in case $u-\bar u\in D^{\tau^*}_{\bar u}$, then we would have that $w=0$ and $v=u-\bar u$, and Case 1 could be applied.
\section{Further extensions and limitations}
\label{Se.4}
The method developed in the previous sections can be extended with the obvious modifications to the case of a control problem governed by an elliptic equation as well as to Neumann control problems for both elliptic and parabolic equations.
However, let us mention two situations where it is difficult that the second order sufficient condition \eqref{E3.1} holds.

First, consider the situation where $L\equiv 0$ and $\nu_\Omega = 1$. In this case we have
\[F''(\bar u)v^2 = -\int_Q
 \bar \varphi \frac{\partial^2 f}{\partial y^2}(x,t,\bar y)z_{v}^2  \,dx\,dt+
\int_\Omega \frac{\partial^2 L_\Omega}{\partial y^2}(x,\bar y(x,T))z_v(x,T)^2\,dx.
\]
Looking at this expression it is easy to notice that the fulfillment of \eqref{E3.1} would depend on a lucky combination of the signs of the adjoint state and the second derivative of the nonlinearity $f$.
Consequently, Theorem \ref{T3.1} does not seem to be applicable to this problem.

A similar situation may occur if a nonlinearity is introduced on the boundary without a boundary observation. Consider, for instance, the problem governed by the elliptic equation
\[\min_{u\in \uad} F(u):=\frac{1}{2}\int_\Omega (y_u-y_d)^2 dx,\]
where $y_d\in L^2(\Omega)$ is given;
\[\uad = \{u\in L^\infty(\Omega):\ \alpha\leq u(x)\leq \beta\mbox{ for a.e. }x\in \Omega\},\]
with $-\infty<\alpha<\beta<\infty$; and
\[
\left\{\begin{array}{rcll}
-\Delta y_u &=& u&\mbox{ in }\Omega,\\
 \partial_{n} y_u+g(x,y_u(x))  & =& 0 &\mbox{ on }\Gamma.
  \end{array}
  \right.
\]
With the straightforward adaptations to this problem of the notation used along the paper, the second derivative of $F$ reads as
\[F''(\bar u)v^2 = \int_\Omega z_v^2 dx - \int_\Gamma \bar \varphi \frac{\partial^2 g}{\partial y^2}(x,\bar y)z_v^2 d\sigma(x).\]
In order to apply our theorem, the second order condition should be
\[F''(\bar u) v^2 \geq \delta \Big( \|z_v\|^2_{L^2(\Omega)} + \|z_v\|^2_{L^2(\Gamma)} \Big) \mbox{ for all }v\in C^\tau_{\bar u}.\]
Once again, this condition is unlikely to be fulfilled.

The situation would be different if we had a boundary observation $y_\Gamma\in L^\infty(\Gamma)$, so that the functional $F$ is given by
\[F(u) = \frac12\int_\Gamma (y_u(x)-y_\Gamma(x))^2 \, d\sigma(x).\]
Then we would get
\[F''(\bar u)v^2 =  \int_\Gamma \left(1-\bar \varphi \frac{\partial^2 g}{\partial y^2}(x,\bar y)\right)z_v^2 d\sigma(x)\]
and the second order sufficient condition
\[F''(\bar u) v^2 \geq \delta  \|z_v\|^2_{L^2(\Gamma)} \mbox{ for all }v\in C^\tau_{\bar u}\]
would have a chance to be fulfilled. For instance, if $\|\bar y -y_\Gamma\|_{L^2(\Gamma)}$ is small enough, then $\|\bar\varphi\|_{L^\infty(\Gamma)}$ is small as well, and, consequently we can deduce the existence of some $\delta >0$ such that $1-\bar \varphi \frac{\partial^2 g}{\partial y^2}(x,\bar y)\geq \delta$, which implies the above second order condition.

From the previous two cases we conclude that a nonlinearity in the whole domain requires a distributed observation and a boundary nonlinearity needs a boundary observation for fulfillment of the second order sufficient condition.


\begin{thebibliography}{10}

\bibitem{BBS2014}
{\sc T.~Bayen, F.~Bonnans, and F.~Silva}, {\em Characterization of local
  quadratic growth for strong minima in the optimal control of semi-linear
  elliptic equations}, Trans. Amer. Math. Soc., 366 (2014), pp.~2063--2087.

\bibitem{BS2016}
{\sc T.~Bayen and F.~Silva}, {\em Second order analysis for strong solutions in
  the optimal control of parabolic equations}, SIAM J. Control Optim., 54
  (2016), pp.~819--844.

\bibitem{Boccardo1997}
{\sc L.~Boccardo}, {\em Elliptic and parabolic differential problems with
  measure data}, Boll. Un. Mat. Ital. A (7), 11 (1997), pp.~439--461.

\bibitem{Bonnans1998}
{\sc F.~Bonnans}, {\em Second-order analysis for control constrained optimal
  control problems of semilinear elliptic systems}, Appl. Math. Optim., 38
  (1998), pp.~303--325.

\bibitem{Casas-1997}
{\sc E.~Casas}, {\em Pontryagin's principle for state-constrained boundary
  control problems of semilinear parabolic equations}, SIAM J. Control Optim.,
  35 (1997), pp.~1297--1327, \url{https://doi.org/10.1137/S0363012995283637}.

\bibitem{Casas2012}
{\sc E.~Casas}, {\em Second order analysis for bang-bang control problems of
  {PDE}s}, SIAM J. Control Optim., 50 (2012), pp.~2355--2372.

\bibitem{Casas-Chrysafinos2012}
{\sc E.~Casas and K.~Chrysafinos}, {\em A discontinuous {G}alerkin
  time-stepping scheme for the velocity tracking problem}, SIAM J. Numer.
  Anal., 50 (2012), pp.~2281--2306.

\bibitem{Casas-Chrysafinos2015}
{\sc E.~Casas and K.~Chrysafinos}, {\em Analysis of the velocity tracking
  control problem for the 3D evolutionary {N}avier--{S}tokes equations}, SIAM J.
  Control Optim., 54 (2016), pp.~99--128.

\bibitem{Casas-Chrysafinos2018}
{\sc E.~Casas and K.~Chrysafinos}, {\em Analysis and optimal control of some
  quasilinear parabolic equations}, Math. Control Relat. Fields, 8 (2018),
  pp.~607--623.

\bibitem{CasasDhamo2011}
{\sc E.~Casas and V.~Dhamo}, {\em Optimality conditions for a class of optimal
  boundary control problems with quasilinear elliptic equations}, Control
  Cybernet., 40 (2011), pp.~457--490.

\bibitem{CHW2012a}
{\sc E.~Casas, R.~Herzog, and G.~Wachsmuth}, {\em Optimality conditions and
  error analysis of semilinear elliptic control problems with {$L^1$} cost
  functional}, SIAM J. Optim., 22 (2012), pp.~795--820,
  \url{https://doi.org/10.1137/110834366}.

\bibitem{CHW2016}
{\sc E.~Casas, R.~Herzog, and G.~Wachsmuth}, {\em Analysis of spatio-temporally
  sparse optimal control problems of semilinear parabolic equations},
  ESAIM Control Optim. Calc. Var., 23 (2017), pp.~263--295.

\bibitem{Casas-Kunisch2016}
{\sc E.~Casas and K.~Kunisch}, {\em Parabolic control problems in space-time
  measure spaces}, ESAIM Control Optim. Calc. Var., 22 (2016), pp.~355--370,
  \url{https://doi.org/10.1051/cocv/2015008}.

\bibitem{Casas-Mateos2017}
{\sc E.~Casas and M.~Mateos}, {\em Optimal control of partial differential
  equations}, in Computational Mathematics, Numerical Analysis and
  Applications, vol.~13 of SEMA SIMAI Springer Ser., Springer, Cham, 2017,
  pp.~3--59.

\bibitem{CMR2007}
{\sc E.~Casas, M.~Mateos, and J.-P. Raymond}, {\em Error estimates for the
  numerical approximation of a distributed control problem for the steady-state
  {N}avier--{S}tokes equations}, SIAM J. Control Optim., 46 (2007),
  pp.~952--982.

\bibitem{CMR2019}
{\sc E.~Casas, M.~Mateos, and A.~R\"{o}sch}, {\em Error estimates for
  semilinear parabolic control problems in the absence of {T}ikhonov term},
  SIAM J. Control Optim., 57 (2019), pp.~2515--2540,
  \url{https://doi.org/10.1137/18M117220X}.

\bibitem{CRT2014}
{\sc E.~Casas, C.~Ryll, and F.~{Tr\"oltzsch}}, {\em Second order and stability
  analysis for optimal sparse control of the {F}itz{H}ugh--{N}agumo equation},
  SIAM J. Control Optim., 53 (2015), pp.~2168--2202.

\bibitem{Casas-Troltzsch2009}
{\sc E.~Casas and F.~Tr\"{o}ltzsch}, {\em First- and second-order optimality
  conditions for a class of optimal control problems with quasilinear elliptic
  equations}, SIAM J. Control Optim., 48 (2009), pp.~688--718,
  \url{https://doi.org/10.1137/080720048}.

\bibitem{Casas-Troltzsch2012}
{\sc E.~Casas and F.~Tr\"{o}ltzsch}, {\em Second order analysis for optimal
  control problems: Improving results expected from from abstract theory}, SIAM
  J. Optim., 22 (2012), pp.~261--279.

\bibitem{Casas-Troltzsch2016}
{\sc E.~Casas and F.~Tr\"{o}ltzsch}, {\em Second-order optimality conditions
  for weak and strong local solutions of parabolic optimal control problems},
  Vietnam J. Math., 44 (2016), pp.~181--202,
  \url{https://doi.org/10.1007/s10013-015-0175-6}.

\bibitem{CWW2017}
{\sc E.~Casas, D.~Wachsmuth, and G.~Wachsmuth}, {\em Sufficient second-order
  conditions for bang-bang control problems}, SIAM J. Control Optim., 55
  (2017), pp.~3066--3090, \url{https://doi.org/10.1137/16M1099674}.

\bibitem{CWW2018}
{\sc E.~Casas, D.~Wachsmuth, and G.~Wachsmuth}, {\em Second-order analysis and
  numerical approximation for bang-bang bilinear control problems}, SIAM J.
  Control Optim., 56 (2018), pp.~4203--4227,
  \url{https://doi.org/10.1137/17M1139953}.

\bibitem{Christof-Wachsmuth2018}
{\sc C.~Christof and G.~Wachsmuth}, {\em No-gap second-order conditions via a
  directional curvature functional}, SIAM J. Optim., 28 (2018), pp.~2097--2130,
  \url{https://doi.org/10.1137/17M1140418}.

\bibitem{Dunn98}
{\sc J.~Dunn}, {\em On second order sufficient conditions for
  structured nonlinear programs in infinite-dimensional function spaces}, in
  Mathematical Programming with Data Perturbations, A.~Fiacco, ed., Marcel Dekker, New York,
  1998, pp.~83--107.

\bibitem{Ekeland-Temam}
{\sc I.~Ekeland and R.~T\'{e}mam}, {\em Convex Analysis and Variational
  Problems}, vol.~28 of Classics in Applied Mathematics, Society for Industrial
  and Applied Mathematics (SIAM), Philadelphia, PA, english~ed., 1999,
  \url{https://doi.org/10.1137/1.9781611971088}.
\newblock Translated from the French.

\bibitem{LSU}
{\sc O.~A. Lady\v{z}enskaja, V.~A. Solonnikov, and N.~N. Ural\'{}ceva}, {\em
  Linear and Quasilinear Equations of Parabolic Type}, Translated from the
  Russian by S. Smith. Translations of Mathematical Monographs, Vol. 23,
  American Mathematical Society, Providence, R.I., 1968.

\bibitem{Maurer-Zowe79}
{\sc H.~Maurer and J.~Zowe}, {\em First and second order necessary and
  sufficient optimality conditions for infinite-dimensional programming
  problems}, Math. Programming, 16 (1979), pp.~98--110,
  \url{https://doi.org/10.1007/BF01582096},
  \url{https://doi.org/10.1007/BF01582096}.

\bibitem{Troltzsch-Wachsmuth05}
{\sc F.~{Tr\"{o}ltzsch} and D.~Wachsmuth}, {\em Second-order sufficient
  optimality conditions for the optimal control of {N}avier-{S}tokes
  equations}, ESAIM Control Optim. Calc. Var., 12 (2006), pp.~93--119.

\end{thebibliography}
\end{document}